\definecolor{reference}{rgb}{0.20,0.36,0.74}
\definecolor{citation}{rgb}{0,.40,.80}
\crefname{section}{\S \!\!}{\S\S \!\!}
\crefname{equation}{}{}
\crefname{enumi}{}{}
\crefname{appendix}{\S \!\!}{\S\S \!\!}  
\theoremstyle{plain}
\newtheorem{theorem}{Theorem}[section]
\newtheorem{mainthm}{Theorem}
\newtheorem{mainconj}{Conjecture}
\newtheorem{proposition}[theorem]{Proposition}
\newtheorem{lemma}[theorem]{Lemma}
\newtheorem{corollary}[theorem]{Corollary}
\newtheorem*{corollary*}{Corollary}
\theoremstyle{definition}
\newtheorem{definition}[theorem]{Definition}
\newtheorem{notation}[theorem]{Notation}
\newtheorem{remark}[theorem]{Remark}
\newtheorem{example}[theorem]{Example}
\newtheorem*{example*}{Example}
\newtheorem*{remark*}{Remark}
\def\cA{\mathcal A}\def\cC{\mathcal C}\def\cD{\mathcal D}
\def\cF{\mathcal F}
\def\cI{\mathcal I}\def\cL{\mathcal L}
\def\cO{\mathcal O}
\def\cQ{\mathcal Q}\def\cS{\mathcal S}\def\cT{\mathcal T}
\def\cY{\mathcal Y}
\newcommand{\fL}{\mathfrak{L}}
\providecommand{\leftsquigarrow}{%
  \mathrel{\mathpalette\reflect@squig\relax}%
}
\newcommand{\reflect@squig}[2]{%
  \reflectbox{$\m@th#1\rightsquigarrow$}%
}
\newcommand{\Fun}{{\sf Fun}}
\newcommand{\Spec}{{\sf Spec}}
\newcommand{\St}{{\sf St}}
\newcommand{\Mod}{{\sf Mod}}
\newcommand{\Perf}{{\sf Perf}}
\renewcommand{\lim}{{\sf lim}}
\newcommand{\id}{{\sf id}}
\newcommand{\Maps}{{\sf Maps}}
\newcommand{\CC}{\mathbb{C}}
\newcommand{\EE}{\mathbb{E}}
\newcommand{\LL}{\mathbb{L}}
\newcommand{\ZZ}{\mathbb{Z}}
\newcommand{\gr}{{\sf gr}}
\newcommand{\Aut}{\mathsf{aut}}
\newcommand{\End}{\mathsf{end}}
\newcommand{\Coh}{\mathsf{Coh}}
\newcommand{\QCoh}{\mathsf{QCoh}}
\newcommand{\IndCoh}{\mathsf{IndCoh}}
\newcommand{\Sph}{\mathsf{Sph}}
\newcommand{\Perv}{\mathsf{Perv}}
\newcommand{\PS}{2\mathsf{Perv}}
\newcommand{\Sh}{\mathsf{Sh}}
\newcommand{\Sing}{\mathsf{Sing}}
\newcommand{\Cone}{\mathsf{Cone}}
\newcommand{\oLL}{\overline{\mathbb{L}}}
\newcommand{\ul}{\underline}
\newcommand{\Loc}{\mathsf{Loc}}
\newcommand{\Ind}{{\sf Ind}}
\newcommand{\Tr}{{\sf Tr}}
\newcommand{\spectral}{\cL_B(\CC/\CC^\times)}
\newcommand{\Exit}{{\sf Exit}}
\newcommand{\Enter}{{\sf Enter}}
\newcommand{\Vect}{{\sf Vect}}
\newcommand{\univ}{L_{{\sf univ}}}
\newcommand{\kk}{\mathbb{C}}
\title{Betti Tate's thesis and the trace of perverse schobers}
\author{Benjamin Gammage and Justin Hilburn}
\begin{document}

\maketitle

\begin{abstract}
We propose a conjecture on the categorical trace of the 2-category of perverse schobers (expected to model the Fukaya-Fueter 2-category of a holomorphic symplectic space). By proving a Betti geometric version of Tate's thesis, and combining it with our previous 3d mirror symmetry equivalence and the Ben-Zvi--Nadler--Preygel result on spectral traces, we are able to establish our conjecture in the simplest interesting case. 
\end{abstract}

\section{Introduction}
Let $\cL_B(\CC/\CC^\times)$ be the Betti loop space $\Maps(S^1_B, \CC/\CC^\times)$ --- i.e., the moduli space of rank-1 Betti local systems on the punctured disk equipped with a flat section --- and let $\fL\CC=\CC((t))$ be the loop space of $\CC,$ which we understand as an (ind-infinite type) ind-scheme. Let $\cS$ be the stratification of $\fL\CC$ by order of zero or pole at $0\in \CC$ (where we understand the constant map to 0 as having a zero of infinite order at 0): in other words, $\cS$ is the stratification of the space $\CC((t))$ of Laurent polynomials by the valuation $\CC((t))\to \ZZ\cup \{\infty\}$ taking a Laurent polynomial to the degree of its lowest-order term. The following result is the main theorem of this paper.
\begin{mainthm}[``Betti Tate's thesis'']\label{mainthm:betti-tate}
There is an equivalence
\begin{equation}\label{eq:mainthm1}
    \Sh^!_\cS(\fL \CC) \simeq \IndCoh(\cL_B(\CC/\CC^\times))
\end{equation}
between the category of (co)sheaves on $\fL \CC$ with singular support in conormals to strata of $\cS$ and the category of ind-coherent sheaves on $\cY.$ This equivalence intertwines the structure of module categories for the monoidal category whose automorphic and spectral realizations are the left- and right-hand sides, respectively, of the ``Betti geometric class field theory'' equivalence
\begin{equation}\label{eq:betti-gcft}
    \Loc(\fL\CC^\times) \simeq \Loc(\ZZ\times \CC^\times) \simeq \IndCoh(B\CC^\times\times \CC^\times) \simeq \IndCoh(\cL_B(B\CC^\times)),
\end{equation}
where $\Loc$ denotes the category of (possibly infinite-dimensional) local systems, and the left- (resp. right-)hand side of \eqref{eq:betti-gcft} acts on the left- (resp. right-)hand side of \eqref{eq:mainthm1} by convolution (resp. pullback and tensor).
\end{mainthm}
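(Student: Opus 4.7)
My plan is to exploit the natural module-category structures over $\cM := \Loc(\fL\CC^\times) \simeq \IndCoh(\cL_B(B\CC^\times))$ on both sides of \eqref{eq:mainthm1}, and to produce the equivalence via a recollement/gluing argument that is the Betti analogue of the classical Mellin/Fourier transform. On the automorphic side, the multiplication action of the ind-group $\fL\CC^\times$ on $\fL\CC$ gives, by convolution, an $\cM$-module structure on $\Sh^!_\cS(\fL\CC)$. On the spectral side, pullback-and-tensor along the map $\cL_B(\CC/\CC^\times)\to\cL_B(B\CC^\times)$ induced by the projection $\CC/\CC^\times\to B\CC^\times$ gives an $\cM$-module structure on $\IndCoh(\cL_B(\CC/\CC^\times))$.

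The central observation is that both sides carry a recollement, but with the roles of the open and closed parts swapped. On the LHS, the open complement $\fL\CC^\times\subset\fL\CC$ is the disjoint union of its connected components (one per valuation), so $\Sh^!_\cS(\fL\CC^\times)\simeq \Loc(\fL\CC^\times)=\cM$, while the closed stratum $\{0\}$ contributes $\Sh^!(\{0\})=\Vect$. On the RHS, the derived stack $\cL_B(\CC/\CC^\times)$ contains a closed substack $\cL_B(B\CC^\times)$ parametrizing loops landing entirely in $B\CC^\times\subset\CC/\CC^\times$ (with $\IndCoh$ equal to $\cM$), together with an open substack $\pt=\CC^\times/\CC^\times$ parametrizing loops that avoid $0\in\CC/\CC^\times$ (with $\IndCoh$ equal to $\Vect$). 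Thus the two pieces on one side match the two pieces on the other after a Fourier-style swap.

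It then remains to identify the gluing functors. On the automorphic side, the gluing between $\cM$ and $\Vect$ is given by the nearby-cycles functor at $0$, computed along the exhaustive filtration $\fL\CC = \colim_N t^{-N}\CC[[t]]$. On the spectral side, the gluing is controlled by the derived self-intersection of $\cL_B(B\CC^\times)$ inside $\cL_B(\CC/\CC^\times)$, which is governed by the nodal singularity at $(g,v)=(1,0)$ in the inertia presentation. Once these two gluing functors are identified under the Betti GCFT equivalence, the two recollements patch together to yield the desired $\cM$-linear equivalence.

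The main obstacle will be this last identification of the gluing data. Concretely, one must match an ind-infinite-dimensional microlocal computation along the conormals of $\cS$ at $\{0\}\subset\fL\CC$ with the finite-dimensional derived geometry of the nodal intersection on the spectral side. This is precisely the Betti analogue of Tate's local functional equation, and I expect it to be the technical heart of the argument; it will likely require a stationary-phase-style argument implementing the Betti GCFT equivalence compatibly with the stratifications on both sides.
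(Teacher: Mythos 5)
Your proposed two-piece decomposition on the automorphic side is where the argument breaks. You identify the open complement of $\{0\}$ in $\fL\CC$ with the Betti loop group $\fL\CC^\times\simeq\ZZ\times\CC^\times$ and hence its (co)sheaf category with $\cM=\Loc(\ZZ\times\CC^\times)$. But these two spaces have the same points and different topologies: inside the ind-scheme $\fL\CC=\CC((t))$, the valuation-$n$ stratum is only locally closed, and the valuation-$(n+1)$ stratum lies in its closure. The strata of $\cS$ restricted to $\fL\CC\setminus\{0\}$ are therefore not clopen components, and $\Sh^!_\cS(\fL\CC\setminus\{0\})$ is not $\Loc(\ZZ\times\CC^\times)$: for each $n$ it carries a monodromy-invariant (co)restriction map between the local systems on adjacent valuation strata. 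These maps are exactly the data your recollement discards, and they cannot be recovered from a single nearby-cycles gluing datum at $\{0\}$. Under the equivalence of the theorem they become the multiplication maps $x:\cO(n)\to\cO(n+1)$, i.e., the entire nontrivial structure of the nodal quotient $\left(\Spec\,\CC[x,y^{\pm}]/(x(y-1))\right)/\CC^\times$; the two pieces you keep on each side ($\cM$ and $\Vect$) only see the normalization. What is actually required is an infinite iterated recollement, one gluing per adjacent pair of strata, which is precisely what the paper encodes by computing $\Sh^!_\cS$ of the finite-dimensional approximations $X_k^\ell$ as modules over an explicit quiver with relations (\Cref{prop:finite-automorphic}).

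Even granting a corrected decomposition, the step you yourself flag as the technical heart --- matching the automorphic gluing data (nearby cycles along the exhaustion of $\fL\CC$) with the spectral gluing data (the derived geometry at the node) --- is deferred rather than carried out, and it is essentially the whole content of the theorem. The paper avoids any such abstract matching: it computes all transition functors in the colimit presentation explicitly, shows that $\IndCoh(\cL_B(\CC/\CC^\times))$ is generated by $\cO$, $\cO_{\CC/\CC^\times}$ and their twists (\Cref{prop:generation}), and defines the equivalence as a colimit of explicit fully faithful functors sending projective generators to these objects, with the module structure over $\Loc(\fL\CC^\times)$ checked by hand at the end. If you want to pursue the Mellin-transform picture, that module structure is a reasonable organizing principle, but you would still have to compute the gluing bimodules between adjacent strata explicitly, which lands you back at the quiver computation.
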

A de Rham version of this theorem, in which the constructible-(co)sheaf category $\Sh_\cS^!$ is replaced by the category ${\sf D}^!$ of D-modules, and the Betti loop space $\cL_B(\CC/\CC^\times)$ is replaced by the de Rham loop space $\cL_{dR}(\CC/\CC^\times):=\Maps(\mathring{\mathbb{D}}_{dR}, \CC/\CC^\times)$ (with the categories in \Cref{eq:betti-gcft} similarly replaced by their de Rham analogues), was proved in \cite{HR21}. We refer to the introduction of that paper for more on the physical and mathematical background of this result, and to \cite{BZN-Betti} for more about the ``Betti'' approach to dualities in 3d and 4d topological field theories.

\subsection{2-categories and trace decategorification}
We would like to understand \Cref{mainthm:betti-tate} as ``reduction on a circle'' of the main theorem of \cite{GHMG}. Let $\LL\subset T^*\CC$ be the Lagrangian $\LL=\CC\cup T^*_0\CC$ given as the union of the zero-section and a cotangent fiber, and let $\oLL=\CC/\CC^\times \cup T^*_{0/\CC^\times}\CC/\CC^\times$ be its image in $T^*(\CC/\CC^\times).$
\begin{theorem}[\cite{GHMG}]\label{thm:3dmirror-basic}
There is an equivalence
\begin{equation}\label{eq:3dmirror-basic}
    2\Perv_\LL(\CC)\simeq 2\IndCoh_{\oLL}(\CC/\CC^\times)
\end{equation}
between the 2-category of perverse schobers on $\CC$ with singular support in $\LL$ and the 2-category of Ind-coherent sheaves of categories on $\CC/\CC^\times$ with singular support in $\oLL.$
\end{theorem}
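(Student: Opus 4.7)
The plan is to establish the equivalence by constructible descent on both sides, using the stratifications $\CC = \CC^\times \sqcup \{0\}$ and $\CC/\CC^\times = \pt \sqcup B\CC^\times$. The conormal Lagrangian of the first stratification is precisely $\LL$, and its image $\oLL$ in $T^*(\CC/\CC^\times)$ plays the same role on the B-side, so the singular-support conditions on both sides are equivalent to constructibility with respect to these two-stratum stratifications. Each 2-category in \eqref{eq:3dmirror-basic} should therefore be presented as a lax limit over the exit-path diagram $\{0 \to 1\}$: a 2-category attached to the open stratum, a 2-category attached to the closed stratum, and a specialization/nearby-cycles 2-functor between them. The strategy is then to identify these three pieces of data separately on the two sides and match them.

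Next I would identify the local pieces. Over the open stratum, the A-side yields 2-local systems on $\CC^\times \simeq S^1_B$, that is, modules over the group 2-ring $\Sp[\ZZ]$; by a categorified Mellin/Koszul duality (a 2-categorical geometric class field theory for $\GG_m$, of the same flavor as \eqref{eq:betti-gcft} but one level up), this matches the B-side contribution over $\pt \subset \CC/\CC^\times$ together with its residual $\CC^\times$-action. Over the closed stratum, the 2-perverse condition along $T^*_0\CC$ produces a single copy of $\PrLSt$, matching $2\IndCoh(B\CC^\times) \simeq \PrLSt$ on the B-side. These local identifications are essentially formal upgrades of the decategorified statement \eqref{eq:betti-gcft} to presentable-stable-category-valued (co)sheaves, and should follow once the categorified local systems/$B\GG_m$-equivariant-categories dictionary is in place.

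The heart of the argument, and the main obstacle, is to match the gluing 2-functors on the two sides. On the A-side, the gluing datum is the 2-categorical nearby-cycles (or spherical) functor controlling how a 2-local system on $\CC^\times$ degenerates onto the fiber over $0$, together with its adjoint vanishing-cycles functor; on the B-side, the gluing datum is the specialization $2\IndCoh(\pt) \to 2\IndCoh(B\CC^\times)$ associated to the open inclusion, essentially pullback along $\pt \to B\CC^\times$. The two should be identified via a 2-categorical Koszul duality trading the monodromy $\ZZ$-action on the generic fiber for $\CC^\times$-equivariance on the special fiber. The technical content is to upgrade this matching coherently, respecting the spherical/biadjoint structure implicit in the notion of 2-perverse schober and the adjoint pair built into the recollement of $2\IndCoh_{\oLL}(\CC/\CC^\times)$; this requires careful bookkeeping of adjoints and of 2-morphism compatibilities in $\PrLSt$. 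Once this compatibility of gluing functors is established, reassembling the local equivalences as lax limits over $\{0 \to 1\}$ yields the claimed 2-equivalence \eqref{eq:3dmirror-basic}.
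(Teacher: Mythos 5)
This statement is not proved in the present paper: it is quoted verbatim from the authors' earlier work \cite{GHMG}, and the only information the present text gives about it is the definition of the right-hand side as the 2-category $\Mod_\cA(\St)$ of module categories over the convolution category $\cA=\IndCoh\bigl((\CC/\CC^\times\sqcup 0/\CC^\times)\times_{\CC/\CC^\times}(\CC/\CC^\times\sqcup 0/\CC^\times)\bigr)$, together with the remark that $2\Perv_\LL(\CC)$ is \emph{defined} to be the 2-category $\Sph$ of spherical adjunctions. So there is no in-paper proof to compare against, and any proof must ultimately amount to an identification $\Sph\simeq\Mod_\cA(\St)$. Your proposal does not engage with this at all, and that is where I see the genuine gap.

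Concretely: your opening premise, that the singular-support condition $\oLL$ on the B-side is ``equivalent to constructibility'' for the stratification $\pt\sqcup B\CC^\times$ of $\CC/\CC^\times$ and hence that $2\IndCoh_{\oLL}(\CC/\CC^\times)$ decomposes as a lax limit (recollement) over the exit-path poset $\{0\to 1\}$, is not justified and is not how this 2-category is defined. The coherent singular-support condition here is of Arinkin--Gaitsgory type, imposed through the choice of the proper surjection $X=\CC/\CC^\times\sqcup 0/\CC^\times\to Y=\CC/\CC^\times$ and the resulting convolution category $\cA$; it is not a constructibility condition, and there is no a priori recollement of $\Mod_\cA(\St)$ along the strata of $Y$. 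On the A-side your three-piece description (a 2-local system on $\CC^\times$, a fiber 2-category at $0$, and a gluing 2-functor) is closer to the mark, but a perverse schober is not an arbitrary section of such a recollement: the gluing functor must be \emph{spherical}, and that condition is precisely what must be matched against the $\cA$-module structure on the B-side. You acknowledge this in the last paragraph but defer it to ``careful bookkeeping of adjoints,'' which is exactly where all of the content of the theorem lives --- one needs either a universal property of the spherical adjunction 2-category or an explicit generators-and-relations (Morita) presentation of $\cA$ to carry it out. As written, the proposal replaces the theorem by two unproved categorified dualities (the ``2-categorical Mellin/Koszul duality'' for the local pieces and the ``2-categorical Koszul duality'' for the gluing functors) without indicating how either would be established.
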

We refer to \cite{GHMG} for more details about these 2-categories. For us, the important point will be that by definition, the right-hand 2-category has a definition as the 2-category $\Mod_\cA(\St)$ of module categories for the monoidal category
\begin{equation}\label{eq:basic-e1cat}
\cA:=\IndCoh\left((\CC/\CC^\times\sqcup 0/\CC^\times)\times_{\CC/\CC^\times}(\CC/\CC^\times\sqcup 0/\CC^\times)\right),
\end{equation}
where the monoidal structure is given by convolution.

We now recall the main result of \cite{BZNP}.
\begin{theorem}[\cite{BZNP}*{Theorem 1.2.12}]\label{thm:bside-traces}
Let $X\to Y$ be proper and surjective with $X,Y$ smooth, and let $\cA$ be the monoidal category $\IndCoh(X\times_Y X),$ with the monoidal structure given by convolution. Then there is an equivalence
\[
\Tr(\cA)\cong \IndCoh_{\Lambda_{X/Y}}(\cL_BY)
\]
of $S^1$-categories between the categorical trace of the monoical category $\cA,$ and the category of Ind-coherent sheaves on the Betti-loop space $\cL_BY$ with a certain singular support condition $\Lambda_{X/Y}.$
\end{theorem}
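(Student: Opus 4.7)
The plan is to compute $\Tr(\cA)$ via its Hochschild-homology description and then identify the result geometrically. The first move is to observe that, under the hypotheses (smoothness of $X$ and $Y$, properness and surjectivity of $X\to Y$), the category $\IndCoh(X)$ is dualizable, and in fact self-dual via Grothendieck--Serre duality, as an $\IndCoh(Y)$-module category. Moreover the convolution monoidal category $\cA=\IndCoh(X\times_Y X)$ identifies with $\End_{\IndCoh(Y)}(\IndCoh(X))$; this is the Morita-type presentation of $\cA$ from which its trace is most accessible.

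By the standard formalism of traces of dualizable objects in $\PrLSt$, the categorical trace of an endomorphism monoidal category $\End_{\cB}(\cM)$ coincides with the relative Hochschild homology of $\cM$ as a $\cB$-module. In our situation this gives
\[
\Tr(\cA)\simeq \IndCoh(X)\otimes_{\IndCoh(X\times_Y X)}\IndCoh(X),
\]
where the bimodule structure is the diagonal one. Next, applying base change for $\IndCoh$ (equivalently, Gaitsgory's tensor-product theorem) to the classes of stacks at hand, the relative tensor product on the right-hand side is equivalent to $\IndCoh$ of the derived fiber product $X\times_{X\times_Y X}X$, which is the derived inertia stack of the morphism $X\to Y$.

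For the \emph{Betti} version of the story, the derived inertia is replaced by its topological counterpart: passing to genuine $S^1$-loops, one finds that the relevant stack is $\cL_B Y=\Maps(S^1_B, Y)$, because the Betti double-diagonal fiber-product description of the loop space of $Y$ is precisely the same formula that arises from Hochschild homology. This Betti-versus-formal distinction mirrors the one between $\cL_B(\CC/\CC^\times)$ and $\cL_{dR}(\CC/\CC^\times)$ discussed in the introduction. The $S^1$-equivariance of the equivalence is automatic: the left-hand side carries the $S^1$-action on Hochschild homology, and the right-hand side carries the rotation action on $\cL_B Y$.

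The hard part will be pinning down the singular support condition $\Lambda_{X/Y}\subset T^*(\cL_B Y)$. The formal manipulations above produce a candidate equivalence of plain stable categories, but identifying the precise cotangent directions cut out by the resolution $X\to Y$, and verifying that the equivalence matches singular supports on both sides, requires explicit microlocal analysis of the geometry of the map $X\to Y$ (one expects $\Lambda_{X/Y}$ to arise from conormals to the fibers of $X\to Y$ read off on the loop space). Once that refinement is in hand, the remaining steps are essentially formal consequences of dualizability and $\IndCoh$ base change.
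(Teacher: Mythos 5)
This statement is not proved in the paper at all: it is quoted verbatim from Ben-Zvi--Nadler--Preygel (\cite{BZNP}, Theorem 1.2.12) and used as a black box, so there is no internal proof to compare your argument against. Judged on its own terms, your outline points in roughly the right direction --- the actual proof does proceed by relating $\cA$-modules to $\IndCoh(Y)$-modules via a Morita/descent argument and then computing a cyclic bar construction --- but two of your key steps are wrong or missing. First, the formula $\Tr(\cA)\simeq \IndCoh(X)\otimes_{\cA}\IndCoh(X)$ is not the Hochschild homology of the monoidal category $\cA$; the trace is $\cA\otimes_{\cA\otimes\cA^{\mathrm{rev}}}\cA$, and Morita invariance identifies $\Tr(\End_{\cB}(\cM))$ with (a corner of) $\Tr(\cB)$, not with $\cM\otimes_{\cA}\cM^{\vee}$. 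A sanity check exposes the error: for $X=Y$ your formula gives $\IndCoh(X)\otimes_{\IndCoh(X)}\IndCoh(X)\simeq\IndCoh(X)$, whereas the trace of $(\IndCoh(X),\otimes)$ is $\IndCoh(\cL_B X)$, and these differ already for $X=B\CC^\times$. Relatedly, $X\times_{X\times_Y X}X$ is not the loop space of $Y$; the loop space arises as the geometric realization of the cyclic nerve of the groupoid $X\times_Y X\rightrightarrows X$ presenting $Y$, which is what the correct cyclic bar construction of the convolution algebra produces.

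Second, the two points you defer --- the passage from the derived/formal loop space $Y\times_{Y\times Y}Y$ to the genuinely Betti loop space $\cL_B Y$, and the identification of the singular-support condition $\Lambda_{X/Y}$ --- are precisely the substance of the theorem, not refinements to be added later. The support condition is forced because $\IndCoh$ does not satisfy descent along a merely proper surjective map: $\IndCoh(X)$ generates only the $\IndCoh(Y)$-modules whose singular support lies in the image of $\Sing(X/Y)$ under the correspondence displayed after the theorem statement, and tracking this through the trace is where all the work lies. As written, your proposal is a plausible strategy sketch with an incorrect intermediate formula and the main technical content absent; since the paper itself simply cites this result, the appropriate move here is to cite \cite{BZNP} rather than to reprove it.
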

The singular support condition $\Lambda_{X/Y}$ of \Cref{thm:bside-traces} is defined as follows: Let $\Sing(X/Y)=T^{*-1}(X\times_Y X)$ be the degree-$(-1)$ part of the cotangent bundle of $X\times_Y X,$ and define $\Lambda_{X/Y}\subset T^{*-1}\cL_BY$ to be the push-pull of $\Sing(X/Y)$ along the correspondence
\[
\xymatrix{
X\times_Y X&\ar[l]
(X\times_YX)\times_{X\times X}X\simeq X\times_{Y\times X}X
\ar[r] & Y\times_{Y\times Y}Y\simeq \cL_BY.
}
\]
It follows that in the case where $X = \CC/\CC^\times\sqcup 0/\CC^\times \to Y = \CC/\CC^\times$ is the map used to define the monoidal category $\cA$ of \eqref{eq:basic-e1cat}, the singular support $\Lambda_{X/Y}$ is as large as possible, namely all of $T^{*-1}\cL_B(\CC/\CC^\times).$ Therefore, rephrasing statements about $\cA$ in the language of $2\IndCoh,$ we may relate the right-hand sides of the equivalences \eqref{eq:mainthm1} and \eqref{eq:3dmirror-basic}:
\begin{corollary}\label{cor:bside-trace}
The categorical trace $\Tr(2\IndCoh_{\oLL}(\CC/\CC^\times))$ of the 2-category of \eqref{eq:3dmirror-basic} is equivalent to the category $\IndCoh(\cL_B(\CC/\CC^\times))$ of Ind-coherent sheaves on the Betti loop space of $\CC/\CC^\times.$
\end{corollary}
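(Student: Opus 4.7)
The plan is to invoke Theorem \ref{thm:bside-traces} for the monoidal category $\cA$ from \eqref{eq:basic-e1cat} and then verify that the resulting singular-support condition is vacuous.

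First, translate the 2-categorical trace into a 1-categorical one. By the description recalled after Theorem \ref{thm:3dmirror-basic}, the 2-category $2\IndCoh_{\oLL}(\CC/\CC^\times)\simeq \Mod_\cA(\St)$. By the Morita invariance of Hochschild homology for presentable stable monoidal categories---i.e., the fact that the categorical trace of the 2-category of module categories over a monoidal category agrees as an $S^1$-equivariant category with the trace of the monoidal category itself---we obtain $\Tr(2\IndCoh_{\oLL}(\CC/\CC^\times)) \simeq \Tr(\cA)$.

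Second, apply Theorem \ref{thm:bside-traces} to the map $X=\CC/\CC^\times\sqcup 0/\CC^\times \to Y=\CC/\CC^\times$ used to define $\cA$. Here $X$ and $Y$ are smooth Artin stacks and the map is visibly proper and surjective, so the theorem yields $\Tr(\cA)\simeq \IndCoh_{\Lambda_{X/Y}}(\cL_B(\CC/\CC^\times))$.

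Third, and most substantively, verify $\Lambda_{X/Y}=\Sing(\cL_B(\CC/\CC^\times))$, so that the singular-support condition collapses and we recover all of $\IndCoh(\cL_B(\CC/\CC^\times))$. The fiber product $X\times_YX$ has four components, of which three are smooth (copies of $\CC/\CC^\times$ or $B\CC^\times$); the only contribution to $\Sing(X/Y)$ comes from the derived self-intersection $0/\CC^\times\times_{\CC/\CC^\times}0/\CC^\times$, which is quasi-smooth with rank-one $(-1)$-cotangent generated by the Koszul dual of the rank-one normal bundle of $B\CC^\times \hookrightarrow \CC/\CC^\times$. Dually, $\Sing(\cL_B(\CC/\CC^\times))$ is supported on the substack $B\CC^\times \times \CC^\times \subset \cL_B(\CC/\CC^\times)$ of loops at the origin, where it is again rank-one, generated by the shifted Lie algebra of the $\CC^\times$-stabilizer. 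Tracking the first class through the push-pull correspondence $X\times_YX\leftarrow X\times_{Y\times X}X\to \cL_BY$ yields precisely the second, and equivariance for the $\CC^\times$-action on $\CC$ together with rotation of the loop parameter forces the identification on the entire $B\CC^\times\times\CC^\times$ locus. This last compatibility---matching the two singular-support classes along the correspondence using deformation theory for the relevant stacks---is the only non-formal step of the argument.
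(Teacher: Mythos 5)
Your overall strategy is exactly the paper's (which offers no separate proof of the corollary beyond the surrounding discussion): identify $2\IndCoh_{\oLL}(\CC/\CC^\times)$ with $\Mod_\cA(\St)$, reduce the trace of the $2$-category to $\Tr(\cA)$ by Morita invariance, invoke \Cref{thm:bside-traces}, and check that $\Lambda_{X/Y}$ exhausts $T^{*-1}\cL_B(\CC/\CC^\times)$. The paper simply asserts this last point; you try to verify it, which is the right instinct, and your source-side computation is correct: $\Sing(X/Y)$ is the zero section except over the derived self-intersection $0/\CC^\times\times_{\CC/\CC^\times}0/\CC^\times$, where it is rank one.

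The error is on the target side. $\Sing(\cL_B(\CC/\CC^\times))$ is \emph{not} supported on all of $B\CC^\times\times\CC^\times$, and it is not ``generated by the shifted Lie algebra of the $\CC^\times$-stabilizer'': stabilizers contribute to the degree-$(+1)$ part of the cotangent complex (stackiness), which is invisible to singular-support theory --- smooth Artin stacks such as $B\CC^\times\times\CC^\times$ have trivial $\Sing$. By \Cref{lem:quotient-presentation} the loop space is the classical l.c.i.\ stack $\left(\Spec\ \CC[x,y^{\pm}]/(x(y-1))\right)/\CC^\times$, and $H^{-1}$ of its cotangent complex is a line supported exactly at the node $\{x=0,\ y=1\}/\CC^\times\cong B\CC^\times$, generated by the class of the defining equation $x(y-1)$ (its differential $(y-1)\,dx+x\,dy$ vanishes only there). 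So ``$\Lambda_{X/Y}$ is as large as possible'' means precisely that the rank-one class on the derived self-intersection pushes-pulls onto the fiber of $\Sing$ over that single node --- nothing more. Your closing appeal to ``equivariance \ldots on the entire $B\CC^\times\times\CC^\times$ locus'' is matching classes on a locus where both sides are actually zero, so the verification $\Lambda_{X/Y}=\Sing(\cL_B(\CC/\CC^\times))$ does not go through as written. The fix is local at the node and is a short Koszul-duality computation identifying the conormal class of $0/\CC^\times\hookrightarrow\CC/\CC^\times$ with the class of $x(y-1)$; once corrected, your argument agrees with, and fills in, the paper's unproved assertion.
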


\subsection{A-side traces}
\Cref{mainthm:betti-tate}, together with \Cref{cor:bside-trace}, is important to us because it gives evidence that there should be an ``A-model'' version of \Cref{thm:bside-traces}, which we interpret as a theorem about the trace of the 2-category of Ind-coherent sheaves on $Y$ with a given singular support condition. The philosophy of 3d mirror symmetry, as laid out in \cite{GHMG}, suggests that such a B-model (i.e., coherent) 2-category should be equivalent to an A-model (i.e., symplectic) 2-category associated to some dual space, as is the case in \Cref{thm:3dmirror-basic}. The 2-category of perverse schobers which appears in that theorem is so far very mysterious and so far is defined only in very special cases, but we would like to have an analogous theory for describing its categorical trace. The main conjecture of this paper is the following:

\begin{mainconj}\label{mainconj:perv-traces}
Let $X$ be an algebraic variety or stack, and let $\LL\subset T^*X$ be a conic Lagrangian. There is an equivalence of $S^1$-categories
\begin{equation}\label{eq:conjecture}
\Tr(2\Perv_{\LL}(X))\simeq \Sh_{\Lambda(\LL)}(\fL X)
\end{equation}
between the categorical trace of the 2-category of perverse schobers on $X$ with singular support in $\LL$ and the category of sheaves on the loop space $\fL X$ with singular support in a certain Lagrangian\footnote{See \Cref{subsec:remarks} for an approach to construction of the Lagrangian $\Lambda(\LL).$} $\Lambda(\LL)\subset T^*\fL X$ determined by $\LL.$
\end{mainconj}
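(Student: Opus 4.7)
The plan is to follow the template of the basic case established in this paper, where the conjecture reduces to combining \Cref{mainthm:betti-tate}, \Cref{thm:3dmirror-basic}, and \Cref{thm:bside-traces}. The goal is to carry out each of these three inputs in the desired generality. First, for a given pair $(X, \LL)$, I would construct a B-side mirror consisting of a derived stack $Y = Y_\LL$ together with a conic Lagrangian $\ol\LL \subset T^*Y$ and a 3d mirror symmetry equivalence
$$2\Perv_\LL(X) \simeq 2\IndCoh_{\ol\LL}(Y),$$
generalizing \Cref{thm:3dmirror-basic}. For many natural local models, especially those arising in boundary conditions of 3d $\cN=4$ gauge theories, such a B-side mirror is expected to exist and to admit an explicit construction as, e.g., a Coulomb branch or cotangent stack of a stacky quotient.

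Next, I would present the underlying monoidal category of the B-side 2-category as $\IndCoh(Z \times_Y Z)$ for some proper surjection $Z \to Y$ resolving $\ol\LL$, in analogy with \eqref{eq:basic-e1cat}, so that \Cref{thm:bside-traces} applies and yields
$$\Tr(2\IndCoh_{\ol\LL}(Y)) \simeq \IndCoh_{\Lambda_{Z/Y}}(\cL_B Y).$$
The final step is to establish a generalized Betti Tate's thesis equivalence $\IndCoh_{\Lambda_{Z/Y}}(\cL_B Y) \simeq \Sh_{\Lambda(\LL)}(\fL X)$, which in the basic case is precisely \Cref{mainthm:betti-tate}. A natural strategy is to work locally on $X$: on smooth strata of $\LL$ the equivalence should be a classical geometric class field theory calculation, while the isolated singular strata should be handled using the basic theorem of this paper together with its equivariant enhancements. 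The global equivalence would then be obtained by gluing along the convolution action of the (locally trivialized) category appearing in \eqref{eq:betti-gcft}, and one would need to verify compatibility with the loop-rotation $S^1$-action on both sides.

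\textbf{Main obstacle.} I expect the most serious difficulty to lie in Steps 1 and 3, which are interrelated: the 2-category $2\Perv_\LL(X)$ is itself currently defined only for a handful of pairs $(X, \LL)$, so both the mirror equivalence and the generalized Tate's thesis must be developed in tandem with the definition itself. The Lagrangian $\Lambda(\LL) \subset T^*\fL X$ referenced in the footnote must be constructed so as to render the final identification tautological on infinitesimal data, translating the singular support $\Lambda_{Z/Y}$ on the B-side via the correspondence appearing in \Cref{thm:bside-traces}. Even for $X = \CC$ with more general Lagrangians, this requires a nontrivial extension of the Betti geometric class field theory \eqref{eq:betti-gcft} to targets beyond $B\CC^\times$, which in turn likely depends on a robust theory of singular support for sheaves on ind-pro-schemes that is richer than what currently appears in the literature.
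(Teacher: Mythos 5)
This statement is a conjecture, not a theorem: the paper offers no proof of it in general, and indeed explicitly notes that the general case can barely even be \emph{stated}, since $2\Perv_\LL(X)$ is defined only in a handful of examples. What the paper does establish is the single case $(X,\LL)=(\CC,\CC\cup T^*_0\CC)$ (\Cref{mainthm:example}), by exactly the three-step combination you describe --- the mirror equivalence of \Cref{thm:3dmirror-basic}, the spectral trace formula of \Cref{thm:bside-traces}, and the Betti Tate's thesis of \Cref{mainthm:betti-tate} --- so your strategy is the same as the paper's route in the one verified instance, and the obstacles you flag (the missing definition of the 2-category, the construction of $\Lambda(\LL)$, the need to extend Betti geometric class field theory) are the same ones the paper acknowledges in \Cref{subsec:remarks}. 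Your proposal is therefore an accurate roadmap but, as you yourself indicate, not a proof; it neither exceeds nor falls short of what the paper itself claims for this statement.
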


Unfortunately, it is not currently possible to prove \Cref{mainconj:perv-traces}, and indeed it is barely even possible to {\em state} this conjecture, given that in almost no cases is the 2-category of perverse schobers even defined. Nevertheless, in the case considered in \cite{GHMG}, where $(X,\LL) = (\CC,\CC\cup T^*_0\CC),$ the 2-category $2\Perv_\LL(X)$ does admit a definition, namely as the 2-category $\Sph$ of {\em spherical adjunctions}, and in this case the combination of \Cref{mainthm:betti-tate}, \Cref{thm:3dmirror-basic}, and \Cref{cor:bside-trace} allows us to conclude that the conjecture is true in this case:

\begin{mainthm}\label{mainthm:example}
Let $2\Perv(\CC,0)$ be the 2-category of perverse schobers on the stratified space $\CC=\CC^\times\sqcup 0,$ which by definition is the 2-category $\Sph$ of spherical adjunctions. Then there is an equivalence
\[
\mathsf{Tr}(2\Perv(\CC,0)) \simeq \Sh^!_\cS(\fL \CC)
\]
between the categorical trace of $\PS(\CC,0)$ and the category of cosheaves on $\cL\CC$ defined above.
\end{mainthm}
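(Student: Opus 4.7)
The plan is to chain together the three results already assembled in the introduction: \Cref{thm:3dmirror-basic}, \Cref{cor:bside-trace}, and \Cref{mainthm:betti-tate}. Since each of the ingredients produces an equivalence of (possibly $S^1$-)categories, the argument is mostly a matter of verifying compatibility.

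First, I would apply \Cref{thm:3dmirror-basic} to the pair $(X,\LL)=(\CC,\CC\cup T^*_0\CC)$, which gives an equivalence of 2-categories $2\Perv_\LL(\CC)\simeq 2\IndCoh_{\oLL}(\CC/\CC^\times)$. Because the categorical trace $\Tr(-)$ is an invariant of 2-categories (valued in $S^1$-categories), this equivalence of 2-categories passes to an equivalence of traces
\[
\Tr(2\Perv(\CC,0))\simeq \Tr(2\IndCoh_{\oLL}(\CC/\CC^\times)).
\]
Next, \Cref{cor:bside-trace}, which was extracted from \cite{BZNP} via the presentation of the right-hand 2-category as $\Mod_\cA(\St)$ with $\cA$ as in \eqref{eq:basic-e1cat} and the observation that the singular support condition $\Lambda_{X/Y}$ is automatically as large as possible, identifies this trace with $\IndCoh(\cL_B(\CC/\CC^\times))$. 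Finally, \Cref{mainthm:betti-tate} identifies the latter with $\Sh^!_\cS(\fL\CC)$. Composing the three equivalences yields the desired identification
\[
\Tr(2\Perv(\CC,0))\simeq \Sh^!_\cS(\fL\CC).
\]

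The main obstacle in this outline is the compatibility question: the categorical trace is naturally an $S^1$-category, and to land correctly on the right-hand side we need the above chain of equivalences to be equivariant, or at least to match the $S^1$-structure implicit in $\Sh^!_\cS(\fL\CC)$ coming from loop rotation on $\fL\CC$. The cleanest way to check this is to observe that both \Cref{cor:bside-trace} and \Cref{mainthm:betti-tate} intertwine the module-category structures for the monoidal category appearing in the Betti geometric class field theory equivalence \eqref{eq:betti-gcft}: on the spectral side this is built into the statement of \cite{BZNP}*{Theorem 1.2.12}, while on the automorphic side this is exactly the compatibility with convolution by $\Loc(\fL\CC^\times)$ recorded in \Cref{mainthm:betti-tate}. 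Thus both sides are presented as modules over the same monoidal category in compatible ways, and the $S^1$-action on the trace is in each case implemented through this module structure (loop rotation on $\cL_B(\CC/\CC^\times)$, respectively on $\fL\CC$). Verifying that the equivalence of \Cref{thm:3dmirror-basic} is also compatible with this module structure --- which is where the argument could conceivably fail --- is the step that requires the most care; I expect it to follow from the explicit presentation of both 2-categories in \cite{GHMG} as module 2-categories over a common $E_2$-category, but this is the point where one should pause before claiming a full proof.
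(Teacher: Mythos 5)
Your proposal is exactly the paper's argument: Theorem~\ref{mainthm:example} is obtained there precisely by chaining \Cref{thm:3dmirror-basic}, \Cref{cor:bside-trace}, and \Cref{mainthm:betti-tate}, with no separate proof given beyond this composition. Your additional remarks on $S^1$-equivariance flag a compatibility the paper leaves implicit, but the route is the same.
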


\begin{remark}
One can also prove a version of \Cref{mainthm:example} describing the center, rather than trace, of the 2-category $2\Perv(\CC,0)$ (or rather its compact objects). On the B-side, this uses an analogue of \Cref{thm:bside-traces} identifying the center of the monoidal category $\Coh(X\times_YX)$ (with notation as in \Cref{thm:bside-traces}) with the category $\Coh_{prop/Y}(\cL_BY)$ of coherent sheaves on $\cL_BY$ whose pushforward to $Y$ is coherent. Under the equivalence \Cref{eq:mainthm1}, this finiteness condition corresponds to the requirement on $\Sh^!_\cS(\fL \CC)$ that objects should have finite-rank (co)stalks. 

Thus the analogue of \Cref{mainthm:example} for centers identifies the center of the category of compact objects in $2\Perv(\CC,0)$ with the subcategory of $\Sh^!_\cS(\fL \CC)$ of objects whose (co)stalks are all finite rank; moreover, this identification should be upgraded to an equivalence of $\EE_2$-categories, where the $\EE_2$-structure on the sheaf category comes from a pair-of-pants correspondence among loop spaces.
In slightly modified form, such an $\EE_2$-category (and an equivalence with the center of the B-side 2-category) was described by \cite{Ballin-Niu} in the setting of conformal field theory.
\end{remark}

\subsection{Further remarks}\label{subsec:remarks}
We conclude the introduction with a brief explication of, and context for, \Cref{mainthm:betti-tate} and \Cref{mainconj:perv-traces}.
The latter conjecture sits as part of a long tradition of relating Hochschild-type invariants of (possibly higher) categories to loop spaces. In the B-model, the fundamental statement is a sort of Hochschild-Kostant-Rosenberg theorem (see for instance \cite{BZN-connections}*{Corollary 1.21}) identifying Hochschild homology of the category $\QCoh(X)$ of quasicoherent sheaves on $X$ with the space of functions on the (Betti) loop space $\cL_B X$; a categorical analogue can be found in \cite{BZFN}. As we described above, this theorem was subsequently modified in \cite{BZNP} to allow for a nontrivial singular-support Lagrangian.

In the A-model, the fundamental result is Jones's theorem \cite{Jones} relating Hochschild homology of the category $\Loc(M)$ of local systems on $M$ to the cohomology of the free loop space $\fL M,$ which has Fukaya-categorical incarnations as the calculation of symplectic homology of a cotangent bundle \cites{Weber,AS06,Abouzaid-based}. A partially-wrapped (i.e., incorporating a nonzero singular-support Lagrangian) version of this theorem is not yet in the literature, but has a conjectural relation to the {\em microlocal homology} of the loop space (for which see \cite{Schefers}*{\S 1.4}). 
These theorems are what our \Cref{mainconj:perv-traces} aims to categorify, with the 2-category of perverse schobers standing in for a conjectural partially wrapped 2-category of holomorphic Lagrangians in $T^*X.$ 

In this 2-category, we expect that the singular-support Lagrangian $\LL\subset T^*X$ should determine a complex Hamiltonian function $H:T^*X\to \CC$ controlling the wrapping of Lagrangians, and that the contribution of this function to the symplectic action functional on the free loop space $\fL T^*X = T^*\fL X$ can be used to produce a superpotential $W:T^*\fL X\to \CC.$ The right-hand side of the conjectural equivalence \eqref{eq:conjecture} is a sheaf-theoretic model for the Fukaya-Seidel type category associated to the Landau-Ginzburg model $(T^*\fL X, W).$ Examples and more detailed justifications will be postponed to future work which begins to construct this 2-category.

Finally, we point out that the equivalence of categories described in \eqref{eq:mainthm1} is t-exact for the obvious t-structures on both sides --- which may seem odd from the perspective of geometric representation theory, where the canonical t-structure on the left-hand side of \eqref{eq:mainthm1} is the {\em perverse} t-structure. The perverse t-structure is more natural from the perspective of \Cref{mainconj:perv-traces} as well, since perverse schobers by construction are intended to categorify perverse sheaves. One can therefore ask what t-structure on the category $\IndCoh(\cL_B(\CC/\CC^\times))$ corresponds to the perverse t-structure. Conjecturally, the answer is the {\em perverse coherent} t-structure of \cite{AB10} (see also \cite{Cautis-Williams} for further discussion, and \cite{Schnell} for another example of this phenomenon). Our \Cref{mainconj:perv-traces} therefore offers some justification for the perverse coherent t-structure within the framework of 3d mirror symmetry: if one begins with a 3d mirror pair as in \Cref{thm:3dmirror-basic}, the perverse coherent t-structure on the spectral trace category of \Cref{thm:bside-traces} should be equivalent to a perverse t-structure on the automorphic trace category of \Cref{mainconj:perv-traces}.
%

\subsection{Categorical conventions}
Throughout this paper, we work in the setting of stable $\CC$-linear $(\infty,1)$-categories as in \cites{Lurie-HTT,Lurie-HA}, or equivalently pretriangulated $\CC$-linear dg-categories as in \cite{Toen-Morita}. (More general coefficients are possible.) We refer to these simply as ``categories.'' We work model-independently; in fact, the categories we study are all equipped with natural t-structures and the functors we describe are t-exact, so our statements and proofs may be read at the level of abelian (1,1)-categories without losing any essential information.
\subsection{Acknowledgements}
BG is supported by an NSF Postdoctoral Research Fellowship, DMS-2001897.
JH is grateful to Sam Raskin for discussions about the de Rham version of this conjecture, and Tudor Dimofte for conversations about 3d gauge theories.
This research was supported in part by Perimeter Institute for Theoretical Physics. Research at Perimeter Institute is supported by the Government of Canada through the Department of Innovation, Science and Economic Development Canada and by the Province of Ontario through the Ministry of Research, Innovation and Science.

\section{Proof of \Cref{mainthm:betti-tate}}
The remainder of this paper is devoted to the proof of \Cref{mainthm:betti-tate}. We begin in \Cref{subsec:placid} by reviewing the theory of constructible cosheaves on infinite-type Ind-schemes. We then describe the automorphic category $\Sh^!_\cS(\fL\CC)$ as a colimit of categories of constructible (co)sheaves on finite-type stratified spaces. In \Cref{subsec:spectral}, we study the spectral category $\IndCoh(\spectral).$ Finally, in \Cref{subsec:functor}, using the description of $\Sh^!_\cS(\fL\CC),$ we define a functor $\Sh^!_\cS(\fL\CC)$ by defining a compatible system of functors on the constructible-sheaf categories of finite-type approximations to $\fL\CC.$ Once constructed, the verification that this functor is an equivalence of categories is straightforward.

\subsection{Cosheaves on a placid ind-scheme}\label{subsec:placid}
We begin by defining
the left-hand category in the equivalence \eqref{eq:mainthm1}.
The problem of defining automorphic categories associated to infinite-type spaces has long been studied in geometric representation theory, going back at least to \cite{Beilinson-Drinfeld}*{\S 7.11} in the D-module setting and more recently codified in \cite{Raskin-placid}; a parallel theory for perverse sheaves has appeared in \cite{BKV}. To define our constructible (co)-sheaf category, we will need very little of this theory beyond the notion of a placid ind-scheme, as dicussed in \cite{Raskin-placid}*{\S 6}. (Since the completion of this paper, a very brief but possibly helpful survey may has appeared in \cite{BZSV}*{\S B.7}, and a suggestion of the relation to the modern understanding of 6-functor formalisms may be found in \cite{Heyer-Mann-6functor}*{Remark 1.3.10}.)

Let $X=\varinjlim X_\alpha$ be an ind-scheme, presented as a colimit of schemes along closed immersions. Suppose moreover that the schemes $X_\alpha$ have compatible presentations $X_\alpha = \varprojlim X_\alpha^\beta$ as limits of finite-type schemes $X_\alpha^\beta$ along smooth affine fibrations. We say that $X$ is stratified if the $X_\alpha^\beta$ are equipped with compatible stratifications, in the sense that for any of the structure maps $X_\alpha^\beta\to X_{\alpha'}^{\beta'},$ the preimage of any stratum of $X_{\alpha'}^{\beta'}$ is a union of strata of $X_{\alpha}^\beta.$ (All stratifications are assumed conical and satisfying the Whitney conditions.)

The category of interest to us will be built out of categories of constructible sheaves on the varieties $X_\alpha^\beta.$
\begin{definition}
For $M$ a finite-dimensional complex variety equipped with a stratification $\cS,$ we write $\Sh_\cS(M)$ for the category of sheaves on $M$ whose restriction to each stratum of $\cS$ is locally constant. Although we do not impose a finite-rank condition on stalks, we still refer to this category as the category of {\em $\cS$-constructible sheaves} on $M$. This is equivalent to the category
\[
\Fun(\Exit(\cS), \Vect_k)
\]
of functors from the exit-path category \cite{Lurie-HA}*{\S A.6} of the stratified space $(M,\cS).$ 

Similarly, we write $\Sh^!_\cS(M)$ for the category of {\em $\cS$-constructible cosheaves} on  $M$, which is equivalent to the category
\[
\Sh^!_\cS(M)\simeq\Fun(\Enter(\cS),\Vect_k)
\]
of functors out of the {\em entrance-path category} $\Enter(\cS):=\Exit(\cS)^{op}.$
\end{definition}

The category of constructible cosheaves is therefore opposite to the category of constructible sheaves. However, on a finite-dimensional space, there is a Verdier-type duality \cite{Curry-dualities} providing an identification $\Sh_\cS(M)\simeq \Sh^!_\cS(M).$ This is precisely analogous to the situation for D-modules, where there are two D-module theories, referred to in \cite{Raskin-placid} as $D^*$ and $D^!,$ admitting the respective functorialities $f_{*,dR}$ and $f^!$ for a map $f:M\to N.$

We now return to the case where $X=\varinjlim X_\alpha$ is a colimit of schemes along closed immersions, where the schemes $X_\alpha = \varprojlim X_\alpha^\beta$ are limits of finite-type schemes $X_\alpha^\beta$ along smooth affine fibrations, and moreover these $X_\alpha^\beta$ are equipped with a compatible stratification, which we call $\cS.$
\begin{definition}
For $X_\alpha=\varprojlim X_\alpha^\beta$ as above,
we define the category of $\cS$-constructible cosheaves on $X_\alpha$ as
the colimit
\[ \Sh^!_\cS(X_\alpha):=\varinjlim_\beta \Sh_\cS(X_\alpha^\beta)\]
along $!$-pullbacks in the presentation of $X_\alpha.$
For $X=\varinjlim X_\alpha^\beta$ as above, we define the category of 
$\cS$-constructible cosheaves on $X_\alpha^\beta$ as the colimit
\[
     \Sh^!_\cS(X) :=\varinjlim_\alpha \Sh_{\cS}(X_\alpha)
\]
along $*$-pushforwards in the presentation of $X.$
\end{definition}

\subsection{Stratification of the loop space}
We are interested in the case where 
\[X=\fL\CC =\left\{\sum_{m\gg -\infty} a_mt^m\mid a_m\in \CC \right\} \]
is the loop space of $\CC.$ It is a colimit of the infinite-dimensional vector spaces
\[ X_k := \left\{ \sum_{m\geq k} a_m t^m\mid a_m\in \CC \right\} \]
equipped with the inclusions $X_k\hookrightarrow X_{k-1}.$ These spaces are themselves compatibly presented as limits of the finite-dimensional vector spaces 
\begin{equation}\label{eq:xkl}
X_k^\ell:= X_k/X_{k+\ell+1}\simeq \left\{\sum_{m=k}^{k+\ell} a_m t^m\mid a_m\in \CC \right\},
\end{equation}
where the vector space $X_k^\ell$ is presented as the quotient of the vector space $X_k$ by the subspace $X_{k+\ell+1}.$ The spaces $X_k^\ell$ are stratified by the degree of the lowest-order term of the Laurent polynomial (with the 0 polynomial being a point stratum), and it is clear that these strata are compatible under the defining maps, so that they define a stratification $\cS$ of $\fL \CC.$ 
(We will also denote by $\cS$ the restrited stratification on each of the finite-dimensional approximations $X_k^\ell.$)

\subsection{The Betti automorphic category}
Following the definition of \Cref{subsec:placid}, the category of interest to us is defined as the colimit
\[
    \Sh^!_{\cS}(\fL\CC) = \varinjlim\varinjlim\Sh^!_{\cS}(X_k^\ell), 
\]
so our first task is to compute the categories $\Sh^!_{\cS}(X_k^\ell)$ associated to the finite-dimensional pieces $X^\ell_k.$

\begin{lemma}\label{lem:tln}
For $0\leq n\leq \ell,$ let $\cT_\ell(n)\subset \CC^{\ell+1}$ be the submanifold
\begin{equation}\label{eq:tln}
\cT_\ell(n) = \{0\}^n\times \CC^\times \times \CC^{\ell-n}
\end{equation}
Write $\cT$ for the stratification of $\CC^{\ell+1}$ with strata $\cT_\ell(n).$
\begin{enumerate}
    \item As a stratified manifold, $(X^\ell_k,\cS)$ is isomorphic to $(\CC^{\ell+1},\cT).$
    \item Under this identification, the projection $X_k^{\ell}\to X_k^{\ell-1}$ corresponds to the projection $\CC^{\ell+1}=\CC^\ell\times \CC\to \CC^\ell.$ This map sends $\cT_\ell(n)$ onto $\cT_{\ell-1}(n)$ for $n<\ell,$ and $\cT_{\ell}(\ell)$ is mapped to $\cT_{\ell-1}(\ell-1)=0.$
\end{enumerate}
\end{lemma}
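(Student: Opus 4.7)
The plan is to verify both claims by unwinding the linear-algebraic description of $X_k^\ell$ from \eqref{eq:xkl}; once the right coordinates are chosen, the lemma reduces to a direct bookkeeping check.

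For part (1), I would use the basis isomorphism $X_k^\ell \cong \CC^{\ell+1}$ sending $\sum_{m=k}^{k+\ell} a_m t^m$ to the ordered tuple $(a_k, a_{k+1}, \ldots, a_{k+\ell})$, and then match $\cS$ to $\cT$ stratum by stratum. The locus of polynomials of valuation exactly $k+n$ (for $0 \leq n \leq \ell$) is cut out by the equations $a_k = \cdots = a_{k+n-1} = 0$ together with $a_{k+n} \neq 0$, which in these ordered coordinates is precisely $\cT_\ell(n) = \{0\}^n \times \CC^\times \times \CC^{\ell - n}$. The zero polynomial corresponds to the origin of $\CC^{\ell+1}$, which accounts for the unique remaining point stratum (the $\cT_\ell(n)$ together cover exactly the complement of $0$).

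For part (2), the map $X_k^\ell = X_k / X_{k+\ell+1} \to X_k / X_{k+\ell} = X_k^{\ell-1}$ is the quotient by the one-dimensional subspace spanned by $t^{k+\ell}$, so in the coordinates of (1) it is exactly the projection $\CC^{\ell+1} = \CC^\ell \times \CC \to \CC^\ell$ forgetting the last factor. The stratum-wise claim is then immediate from the defining equations: for $n < \ell$, the constraints defining $\cT_\ell(n)$ involve only $a_k, \ldots, a_{k+n}$ (all retained under the projection), so $\cT_\ell(n)$ maps surjectively onto $\cT_{\ell-1}(n)$ with one-dimensional fibers; for $n = \ell$, the condition $a_{k+\ell} \neq 0$ is lost while the vanishing of all earlier coordinates is preserved, so $\cT_\ell(\ell)$ collapses onto the origin $0 \in \CC^\ell$. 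I do not anticipate any substantive difficulty; the whole argument is a coordinate check, with the only real pitfall being to keep conventions straight so that the projection forgets the top-degree (rather than bottom-degree) coefficient.
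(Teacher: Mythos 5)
Your proof is correct and is essentially the paper's own argument, which simply takes $a_k,\ldots,a_{k+\ell}$ as coordinates on $X_k^\ell$ and observes that part (2) follows immediately; you have just spelled out the stratum-by-stratum bookkeeping more explicitly. Your reading of the last clause of (2) as ``$\cT_\ell(\ell)$ collapses onto the origin of $\CC^\ell$'' is the geometrically correct one (the paper's notation $\cT_{\ell-1}(\ell-1)=0$ there is an abuse, since $\cT_{\ell-1}(\ell-1)$ as defined is $\{0\}^{\ell-1}\times\CC^\times$, not the point stratum).
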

\begin{proof}
The identification of $X_k^\ell,$ as defined in \eqref{eq:xkl}, with $\CC^{\ell+1}$ is given by taking $a_k,\ldots,a_{k+\ell}$ as coordinates. Part (b) then follows immediately.
\end{proof}

The following result is the main calculation of this paper.
\begin{proposition}\label{prop:finite-automorphic}
\begin{enumerate}
    \item The category $\Sh^!_{\cS}(X_k^\ell)$ is equivalent to the category $\Mod_{A_k^\ell}$ of $A_k^\ell$-modules, where $A_k^{\ell}$ is is obtained from the path algebra of the quiver
    \begin{equation}\label{eq:quiver}
    \cQ_k^\ell:=\left(\begin{tikzcd}
    \bullet_k\ar{r}{c_{k+1}}\ar[loop above]{}{m_k^\pm}&
    \bullet_{k+1}\ar{r}{c_{k+2}}\ar[loop above]{}{m_{k+1}^\pm}&
    \cdots\ar{r}{c_{k+\ell-1}}&
    \bullet_{k+\ell-1}\ar[loop above]{}{m_{k+\ell-1}^\pm}\ar{r}{c_{k+\ell}}&
    \bullet_{k+\ell}
    \end{tikzcd}\right)
    \end{equation}
    by the ideal of relations 
    \begin{equation}\label{eq:relations-ideal}
    \cI_k^\ell:= \left\langle c_{i+1}(1-m_i), (1-m_j)c_j\right\rangle_{k\leq i <k+\ell, k+1\leq j<\ell},
    \end{equation} and the notation $m^\pm$ means that the map $m$ is required to be invertible.
    \item Let $p_k^{\ell+1}:X_k^{\ell+1}\to X_k^{\ell}$ be the natural quotient map.
    Then in terms of the above quiver presentations, the pullback
    \[
    (p_k^{\ell+1})^!:\Sh^!_\cS(X_k^{\ell})\to \Sh^!_\cS(X_k^{\ell+1})
    \]
    is given by
    \[
    \left(
    \begin{tikzcd}
    V_{k}\ar{r}{c_{k+1}}\ar[loop above]{}{m_k}&
    \cdots\ar{r}{c_{k+\ell-2}}&
    V_{k+\ell-1}\ar[loop above]{}{m_{k+\ell-1}}\ar{r}{c_{k+\ell}}&
    V_{k+\ell}
    \end{tikzcd}
    \right)
    \mapsto
    \left(
        \begin{tikzcd}
    V_{k}\ar{r}{c_{k+1}}\ar[loop above]{}{m_k}&
    \cdots\ar{r}{c_{k+\ell-1}}&
    V_{k+\ell-1}\ar[loop above]{}{m_{k+\ell-1}}\ar{r}{c_{k+\ell}}&
    V_{k+\ell}\ar{r}{\id_{V_{k+\ell}}}\ar[loop above]{}{\id_{V_{k+\ell}}}&
    V_{k+\ell}
    \end{tikzcd}
    \right).
    \]
    \item Let $i_{k}^{\ell}:X_{k}^\ell\to X_{k-1}^{\ell+1}$ be the natural inclusion map. Then in terms of the above quiver presentations, the pushforward
    \[
    (i_{k}^{\ell})_!:\Sh^!_\cS(X_{k}^\ell)\to \Sh^!_\cS(X_{k-1}^{\ell+1})
    \]
    is given by
    \[
    \left(
    \begin{tikzcd}
    V_{k}\ar{r}{c_{k+1}}\ar[loop above]{}{m_k}&
    \cdots\ar{r}{c_{k+\ell-2}}&
    V_{k+\ell-1}\ar[loop above]{}{m_{k+\ell-1}}\ar{r}{c_{k+\ell}}&
    V_{k+\ell}
    \end{tikzcd}
    \right)
    \mapsto
    \left(
    \begin{tikzcd}
    0\ar{r}{0}\ar[loop above]{}{0}&
    V_{k}\ar{r}{c_{k+1}}\ar[loop above]{}{m_k}&
    \cdots\ar{r}{c_{k+\ell-2}}&
    V_{k+\ell-1}\ar[loop above]{}{m_{k+\ell-1}}\ar{r}{c_{k+\ell}}&
    V_{k+\ell}
    \end{tikzcd}
    \right).
    \]
\end{enumerate}
\end{proposition}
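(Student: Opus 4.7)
The plan is to identify $(X_k^\ell, \cS)$ with $(\CC^{\ell+1}, \cT)$ via \Cref{lem:tln} and compute $\Sh^!_\cS(\CC^{\ell+1}, \cT)$ by induction on $\ell$. For the base case $\ell = 0$, the stratified space has the single stratum $\cT_0(0) = \CC^\times$, so $\Sh^!_\cS(\CC, \cT) \simeq \Loc(\CC^\times) \simeq \Mod_{\CC[m_k^\pm]} = \Mod_{A_k^0}$.

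For the inductive step, I would decompose $\CC^{\ell+1}$ along its first coordinate $a_k$ as $(\CC^\times \times \CC^\ell) \sqcup (\{0\}\times\CC^\ell)$ and apply the open/closed recollement for constructible cosheaves. The open piece is the stratum $\cT_\ell(0)$, which is homotopy equivalent to $\CC^\times$ and contributes $\Loc(\CC^\times) \simeq \Mod_{\CC[m_k^\pm]}$; the closed subspace $\{0\}\times\CC^\ell$ with its restricted stratification is isomorphic, after shifting the degree index, to $(X_{k+1}^{\ell-1}, \cS)$, so by induction contributes $\Mod_{A_{k+1}^{\ell-1}}$. The recollement assembles a cosheaf from a $\cQ_{k+1}^{\ell-1}$-module, a $\CC[m_k^\pm]$-module $V_k$, and a gluing morphism $c_{k+1}: V_k \to V_{k+1}$ realizing the entrance path from $\cT_\ell(0)$ into the adjacent stratum $\cT_\ell(1)$; this recovers the quiver $\cQ_k^\ell$.

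The technical heart of the proof, and what I expect to be the main obstacle, is the verification of the relations defining the ideal $\cI_k^\ell$. The relation $c_{k+1}(1-m_k) = 0$ reflects the fact that the loop around $a_k = 0$ generating $\pi_1(\cT_\ell(0))$ contracts inside $\overline{\cT_\ell(0)}$, so the open-stratum monodromy $m_k$ acts trivially after the entrance path; the relation $(1-m_{k+1})c_{k+1} = 0$ reflects the fact that the generator of $\pi_1(\cT_\ell(1))$, a loop in the $a_{k+1}$-direction, is null-homotopic in the source stratum $\cT_\ell(0)$ where $a_{k+1}$ varies in all of $\CC$, so the target monodromy $m_{k+1}$ fixes the image of $c_{k+1}$. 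Translating these geometric facts into identifications in the entrance-path $\infty$-category proceeds via a local analysis near a point of $\cT_\ell(1)$, combining the one-dimensional normal model $(\CC, \{0\}\cup\CC^\times)$ with the tangential $\CC^\times$ factor contributing the target monodromy.

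Parts (2) and (3) then follow by matching the geometry of the maps $p_k^{\ell+1}$ and $i_k^\ell$ to the quiver formulas. The projection $p_k^{\ell+1}$ is a $\CC$-bundle over each stratum of $X_k^\ell$, so the pulled-back cosheaf is constant in the fiber direction; on the new stratum $\cT_{\ell+1}(\ell+1)=\{0\}^{\ell+1}\times\CC^\times$, which lies over the origin of $X_k^\ell$, continuity from $\cT_\ell(\ell)$ fixes the stalk as $V_{k+\ell}$, with monodromy trivialized along the new free direction $a_{k+\ell+1}$ and identity entrance map, yielding the formula for $(p_k^{\ell+1})^!$. The closed inclusion $i_k^\ell$ identifies $X_k^\ell$ with $\{a_{k-1}=0\}\subset X_{k-1}^{\ell+1}$, and extension by zero on the complementary open stratum $\cT_{\ell+1}(0)$ produces $V_{k-1}=0$ at the new leftmost vertex, recovering the formula for $(i_k^\ell)_!$.
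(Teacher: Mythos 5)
Your overall strategy --- identify $(X_k^\ell,\cS)$ with the coordinate model via \Cref{lem:tln}, and extract the quiver from the local geometry of the strata, with the two relations in $\cI_k^\ell$ coming from (i) contracting the source-stratum loop inside the closure and (ii) lifting the target-stratum loop to a contractible loop in the more open stratum --- is essentially the argument the paper gives, just organized as a recollement induction rather than a direct description of the exit-path category after working out the low-$\ell$ cases by hand. Your treatments of parts (2) and (3) also match the paper's (the projection collapses the two deepest strata and is a trivial bundle elsewhere; the closed inclusion gives extension by zero at the new open stratum).

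There is, however, a concrete error at the base of your induction, and it propagates to give the wrong quiver. The stratification $\cS$ includes the zero polynomial as a point stratum, and this stratum is what the loopless terminal vertex $\bullet_{k+\ell}$ of $\cQ_k^\ell$ encodes: note that $A_k^0$ is the path algebra of a single vertex with \emph{no} loop, i.e.\ $A_k^0\simeq\CC$ and $\Mod_{A_k^0}\simeq\Vect$. Your base case instead declares the single stratum of $X_k^0\simeq\CC$ to be $\cT_0(0)=\CC^\times$ (which does not even cover the origin) and concludes $\Sh^!\simeq\Loc(\CC^\times)\simeq\Mod_{\CC[m^\pm]}$. Unwinding your induction, this places a monodromy loop $m_{k+\ell}^\pm$ on the terminal vertex, producing a quiver with loops on all $\ell+1$ vertices rather than only the first $\ell$; this contradicts the statement you are proving and is also inconsistent with your own formulas in parts (2) and (3), where the terminal vertex carries no loop. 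To repair the argument you must account for the point stratum: the deepest piece of the recollement should be the contractible stratum at the origin contributing $\Vect$, with the adjacent $\CC^\times$-shaped stratum feeding into it via a corestriction subject to the single relation $c(1-m)=0$ (no target-monodromy relation, since the target has no monodromy), exactly as in the paper's $\ell=1$ warm-up with strata $\{0\}$ and $\CC^\times$. Once the base case is corrected, the inductive step you describe does produce $\cQ_k^\ell$ with the relations $\cI_k^\ell$, provided you also justify (as you flag, and as the paper likewise only sketches) that the entrance-path category is generated by elementary paths between adjacent strata, so that the gluing data in your recollement reduces to the single morphism $c_{k+1}$.
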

\begin{proof}
Because sheaves are more familiar than cosheaves, we will work here with the exit-path rather than entry-path category, and pass to opposites at the end. From the previous lemma, we see that our main task is to understand the geometry of the stratified space $(\CC^{\ell+1},\cT_\ell)$ described in \eqref{eq:tln}. As we shall see, the normal geometry of each stratum (with the slight exception of the smallest stratum) is equivalent, so that the case $\ell=2$ will already contain the entire complexity of the situation.

Before considering $\ell=2$, we begin as a warm-up with the case $\ell=1.$ In this case, the exit-path category $\Exit(\CC,\cT_1)$ contains two isomorphism classes of objects, corresponding to the two strata $\{0\}$ and $\CC^\times$; the endomorphisms of the former are trivial, and the endomorphisms of the latter are generated by a single invertible element $m$. There is an $S^1$'s worth of exit-paths from $\{0\}$ to $\CC^\times,$ or equivalently there is a monodromy-invariant exit path $\{0\}\to\CC^\times.$
In other words, the category $\Sh_{\cT_1}(\CC)$ is equivalent to the data of a pair of vector spaces $V,W$ with a monodromy automorphism $m\in \Aut(W)$ and a monodromy-invariant ``restriction'' map $r:V\to W$, or equivalently a representation of the quiver
\begin{equation}\label{eq:quiver-basic}
    \left(
    \begin{tikzcd}
    \bullet\ar{r}{r}&
    \bullet\ar[loop above]{}{m^\pm}
    \end{tikzcd}
    \right)
\end{equation}
satsifying the monodromy-invariance relation $mr-r=0.$ (In the more familiar perverse description, we keep the nearby-cycles space $W$ but replace $V$ with the vanishing-cycles space $\Cone(r).$)

We now move to the $\ell=2$ case. The first two strata $\{0\}\times \{0\}$ and $\{0\}\times \CC^\times$ are the same as before, with the difference being the addition of the new stratum $\CC^\times \times \CC,$ which, like the second stratum, has a single monodromy automorphism $m_2.$ There is now a new restriction map from the second to the third stratum, which is invariant for the monodromies on both the second and the third strata. As we saw in the case $\ell=1,$ these monodromy-invariances are expressed in the description of $\Sh_{\cT_2}(\CC^2)$ as representations of the quiver
\[
    \left(
    \begin{tikzcd}
    \bullet\ar{r}{r_1}&
    \bullet\ar[loop above]{}{m^\pm_1}\ar{r}{r_2}&
    \bullet\ar[loop above]{}{m^\pm_2}
    \end{tikzcd}
    \right)
\]
by the relations $r_2m_1-r_2=0,$ $m_2r_2-r_2=0$ (together with the previous monodromy-invariance relation $m_1r_1-r_1=0$).

Continuing to the general case, we see that $\Exit(\CC^\ell,\cT_\ell)$ is given by a single contractible stratum and a series of strata homotopic to $S^1,$ with subsequent restriction maps which are invariant for both monodromies. This situation is mostly symmetric: a representation of $\Enter(\CC^\ell,\cT_\ell)$ is given by a sequence of corestriction maps $c_n,$ whose invariance under monodromies $m_n^\pm$ imposes the relations \eqref{eq:relations-ideal}. This proves (1).

Now (2) follows straightforwardly from part (2) of \Cref{lem:tln}: the projection 
collapses the smallest two strata of $\CC^{\ell+1}$, 
and maps the remaining strata isomorphically. 
(Note that the pullback functor is t-exact because the Verdier duality exchanging sheaves and cosheaves with which we began this proof exchanged the $p^!$ pullback with $p^*.$)
Finally, (3) is immediate from the fact that $i_k^\ell$ is an isomorphism onto the complement of the biggest stratum of $\CC^{\ell+1}.$
\end{proof}

\begin{example}
Although the language of exit- and entry-path categories provides the cleanest description of the categories $\Sh^!_{\cS}(X_k^\ell),$ it may obscure some of the actual geometry underlying these categories. In order to clarify the calculations of \Cref{prop:finite-automorphic} for readers more accustomed to concrete sheaf-theoretic calculations, we provide a hands-on calculation of this form in the simplest case, $\ell=1.$

It is a corollary of the standard recollement relations that if $\cS$ is a stratification of $X$ with contractible strata, then the category $\Sh_{\cS}(X)$ is generated by $!$-pushforwards of constant sheaves on strata. (See \cite{Favero-Huang} for applications of this result in mirror symmetry.) Similarly, when the strata are not contractible, the category may be generated by $!$-pushforwards of the ``universal local system'' on each stratum. In general this local system will not be concentrated in degree 0, but in our case, when each noncontractible stratum is homotopic to the $K(\pi,1)$ space $S^1,$ the universal local system $\univ$ has stalk $\kk[m^\pm],$ with monodromy given by $m.$

In the $\ell=1$ case of \cref{prop:finite-automorphic}, the category of interest, namely the category $\Sh_{\CC\cup T^*_0\CC}(\CC)$ of sheaves on the stratified space $\CC=\CC^\times\sqcup\{0\},$ is generated by the skyscraper sheaf $\delta_0=i_!\ul\kk$ and the pushforward $j_!\univ$ of the universal local system, where we write $\{0\}\xhookrightarrow{i} \CC\xhookleftarrow{j} \CC^\times$ for the respective inclusions of strata. The category may therefore be presented as modules over the algebra
\begin{equation}\label{eq:sheaves}
    A:=\End(i_!\ul\kk\oplus j_!\univ) \simeq
    \left(\begin{array}{ll}
        \kk & 0\\
        \Gamma_0(\CC,j_!\univ)& \kk[m^\pm]
    \end{array}\right),
\end{equation}
where we write $\Gamma_0(j_!\univ)$ for sections of the sheaf $i^!j_!\univ,$ which may be computed using the exact sequence 
\[
0\to \Gamma_0(\CC,j_!\univ)\to \Gamma(\CC,j_!\univ)\to \Gamma(\CC^\times,j^*j_!\univ)\to 0
\]
whose middle term is 0 and whose right-hand term is is the cohomology of the universal local system on $\CC^\times.$ We conclude that the bottom-left corner of \eqref{eq:sheaves} is given by the complex $\kk[m^\pm]\xrightarrow{m-1}\kk[m^\pm],$
thus recovering the quiver-with-relations description from \eqref{eq:quiver-basic}.
\end{example}

\begin{remark}
\Cref{prop:finite-automorphic} gives a complete description of the categories $\Sh^!_\cS(X_k)$ and $\Sh^!_\cS(\fL),$ since these are obtained as filtered colimits of the categories $\Mod_{A_k^\ell}$ under the functors described. Informally, we may describe $\Sh^!_\cS(X_k)$ as obtained from $\Mod_{A_k^\ell}$ by ``extending the quiver infinitely to the left'' and requiring that a representation of the quiver be eventually 0 on the left. Afterward, $\Sh^!_\cS(\fL)$ is obtained from the resulting category by ``extending the quiver infinitely to the right'' -- so that in particular there is no longer any terminal node -- and requiring that the arrows $c_n$ are eventually isomorphisms for $n\gg 0.$
\end{remark}

%

\subsection{The Betti spectral category}\label{subsec:spectral}
To complete the proof of \Cref{mainthm:betti-tate}, we need to identify the colimit of the categories described in \Cref{prop:finite-automorphic} as the category of Ind-coherent sheaves on the Betti loop space $\cL_B(\CC/\CC^\times).$
\begin{lemma}\label{lem:quotient-presentation}
The loop space $\cL_B(\CC/\CC^\times)$ admits a global quotient presentation
\begin{equation}\label{eq:globalquotient}
    \cL_B(\CC/\CC^\times) \simeq \left(\Spec\  \CC[x,y^\pm]/(x(y-1)) \right)/\CC^\times,
\end{equation}
where the weights for the $\CC^\times$ action are $|x|=1,|y|=0.$
\end{lemma}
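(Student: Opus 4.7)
The plan is to compute $\cL_B(\CC/\CC^\times)$ directly from the definition as the derived self-intersection of the diagonal, $\cL_B X \simeq X\times_{X\times X} X$, by unwinding the quotient-stack presentation $\CC/\CC^\times = [\CC/\CC^\times]$. For a general quotient stack $[V/G]$, this fiber product has the standard ``inertia'' description
\[
\cL_B[V/G] \simeq [I_{V,G}/G], \qquad I_{V,G} := \{(v,g)\in V\times G : g\cdot v = v\},
\]
where $G$ acts on $I_{V,G}$ through its action on $V$ together with conjugation on itself. The calculation then amounts to specializing this to $V=\CC$ and $G=\CC^\times$.

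First I would record the inertia formula above, either by citation or by unwinding the groupoid of $T$-points: a $T$-point of $\cL_B[V/G]$ is a $G$-torsor $P\to T$ equipped with an equivariant map $\phi:P\to V$ together with an automorphism $\alpha$ of the pair $(P,\phi)$; locally on $T$, the automorphism $\alpha$ is an element $g\in G$ fixing the image of $\phi$, which is exactly the data classified by $[I_{V,G}/G]$. Second, I would specialize to $V=\CC$ and $G=\CC^\times$ with weight-one action: the fixed-point equation $yx=x$ becomes the single relation $x(y-1)=0$ inside $\Spec\CC[x,y^\pm]$, so $I_{\CC,\CC^\times} \simeq \Spec \CC[x,y^\pm]/(x(y-1))$. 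Third, I would read off the residual $\CC^\times$-weights: the action on the $V=\CC$ factor gives $|x|=1$, and the conjugation action on $G=\CC^\times$ is trivial since $\CC^\times$ is abelian, giving $|y|=0$. Combining these yields \eqref{eq:globalquotient}.

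There is essentially no obstacle here: this is a textbook calculation of the inertia stack of a global quotient. The only mild subtlety worth checking is that the derived fiber product implicit in $\cL_B$ agrees with the classical one in this case; this is true because $x(y-1)$ is a non-zero-divisor on the smooth scheme $\CC\times\CC^\times$, so the Koszul complex resolving the fiber is concentrated in two terms and presents exactly the ordinary quotient ring. Thus the proposed equivalence holds with no higher derived structure to track.
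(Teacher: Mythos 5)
Your proof is correct and follows essentially the same route as the paper: both identify $\cL_B(\CC/\CC^\times)$ with the $\CC^\times$-quotient of the inertia locus $\{(x,y)\in\CC\times\CC^\times : yx=x\}$, which is cut out by the single relation $x(y-1)=0$. Your extra check that the derived fiber product is classical (since $x(y-1)$ is a non-zero-divisor on $\CC\times\CC^\times$) is a worthwhile point the paper leaves implicit.
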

\begin{proof}
The loop space of a global quotient stack $X/G$ may be described as the fiber product
\[
\cL_B(X/G) \simeq (X/G)\times_{(X\times X)/G}(X\times G)/G,
\]
where the left map is the diagonal and the right map is $(a,p),$ where $a$ is the action and $p$ is the projection. In other words, the loop space $\cL_B(X/G)$ is the $G$-quotient of $\{(x,g)\mid gx=x\}.$ In the case where $X=\CC,G=\CC^\times,$ this space is
\[
\{(z,g)\in \CC\times \CC^\times\mid gz = z\} = \CC\times \{1\}\cup_{(0,1)}\{0\}\times \CC^\times.
\]
This is precisely the desired presentation of $\cL_B(\CC/\CC^\times)$
\end{proof}
\begin{remark}
\Cref{lem:quotient-presentation} presents $\cL_B(\CC/\CC^\times)$ as a pushout
\begin{equation}\label{eq:spectral-colimit}
    \cL_B(\CC/\CC^\times) \simeq \varinjlim\left(\CC/\CC^\times \gets B\CC^\times \to \CC^\times \times B\CC^\times\right).
\end{equation}
Since $\IndCoh$ satisfies descent for the colimit \eqref{eq:spectral-colimit}, by the standard trick \cite{GR-vol1}*{Chapter 1, Proposition 2.5.7} of exchanging limits and colimits of presentable categories by passing to left adjoints, the colimit presentation \eqref{eq:spectral-colimit} entails a description of the spectral category $\IndCoh(\cL_B(\CC/\CC^\times))$ as the category of triples $(F^\bullet V, W^\bullet\circlearrowleft\varphi,(W^\bullet)^\varphi\simeq \gr^\bullet(V))$ of a filtered vector space $F^\bullet V,$ a graded vector space $W^\bullet$ equipped with an automorphism $\varphi\in\Aut_{{\sf grVect}}(W^\bullet),$ and an equivalence (of graded vector spaces) between the 1-eigenspace of $\varphi$ and the associated graded of $F^\bullet V.$
\end{remark}

\begin{notation}
At this point it will be useful for us to introduce notation for some objects of the spectral category $\IndCoh(\cL_B(\CC/\CC^\times))$ which we will need to discuss frequently. 
\begin{itemize}
\item $\cO$ denotes the structure sheaf of $\cL_B(\CC/\CC^\times).$
\item $\cO_{\CC/\CC^\times}$ denotes the pushforward of the structure sheaf of $\CC/\CC^\times$ along the inclusion $\CC/\CC^\times\hookrightarrow \cL_B(\CC/\CC^\times)$ of constant loops. 
\item We write $\cO(n)$ for the pullback of the weight-$n$ invertible sheaf $\cO_{B\CC^\times}(n)\in\Coh(B\CC^\times)$ along the map $\cL_B(\CC/\CC^\times)\to B\CC^\times$ coming from the global quotient presentation \eqref{eq:globalquotient}.
\item More generally, any sheaf $\cF\in \IndCoh(\spectral),$ we write $\cF(n)$ for the tensor product $\cF\otimes \cO(n),$ and we refer to this as the {\em twist} of $\cF.$
\end{itemize}
\end{notation}

\begin{proposition}\label{prop:generation}
The category $\IndCoh(\cL_B(\CC/\CC^\times))$ is generated under colimits by the objects $\cO,\cO_{\CC/\CC^\times}$ and their twists.
\end{proposition}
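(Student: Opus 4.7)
The plan is to leverage the global-quotient/pushout description
\[
\cL_B(\CC/\CC^\times) \simeq U \cup_W V,\qquad U=\CC/\CC^\times,\ V=\CC^\times\times B\CC^\times,\ W=B\CC^\times,
\]
from \Cref{lem:quotient-presentation}, equivalent at the ring level to the fiber product $A = \CC[x]\times_{\CC}\CC[y^{\pm}]$. Since both $U$ and $V$ are of the form $\Spec R/\CC^\times$ for $R$ a smooth graded algebra, the restrictions $\cO_U(n)$ generate $\IndCoh(U)\simeq\Mod_{\CC[x]}^{\CC^\times}$ under colimits (as graded free generators), and similarly $\cO_V(n)$ generate $\IndCoh(V)\simeq\Mod_{\CC[y^{\pm}]}^{\CC^\times}$.

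I would then invoke ind-coherent descent along the above pushout diagram --- concretely, the statement that $\Mod_A \simeq \Mod_{\CC[x]} \times_{\Mod_{\CC}} \Mod_{\CC[y^{\pm}]}$ for the fiber-product ring $A$ --- to conclude that every $\cF\in\IndCoh(\cL_B(\CC/\CC^\times))$ sits in a fiber sequence
\[
\cF \longrightarrow (i_U)_*(\cF|_U)\oplus (i_V)_*(\cF|_V) \longrightarrow (i_W)_*(\cF|_W).
\]
Because $W$ is closed in $U$, the pushforward $(i_W)_*$ factors through $(i_U)_*$, so $\cF$ lies in the colimit-closure of the pushforwards of objects of $\IndCoh(U)$ and $\IndCoh(V)$, and in particular in the colimit-closure of $\{(i_U)_*\cO_U(n),(i_V)_*\cO_V(n)\}_{n\in\ZZ}$.

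It remains to show that these pushforwards lie in the proposed generating set $\{\cO(n),\cO_{\CC/\CC^\times}(n)\}$. The identity $(i_U)_*\cO_U(n) = \cO_{\CC/\CC^\times}(n)$ holds by definition together with the projection formula. For the pushforwards from $V$, the key input is the short exact sequence of graded $A$-modules
\[
0 \longrightarrow (x) \longrightarrow A \longrightarrow A/(x) \longrightarrow 0.
\]
The relation $xy=x$ in $A$ implies that the multiplication map $A/(y-1)\to(x)$, $1\mapsto x$, is an isomorphism which shifts the grading by $+1$; that is, $(x)\simeq\cO_{\CC/\CC^\times}(1)$. Since $A/(x) = \CC[y^{\pm}]$ is precisely $(i_V)_*\cO_V$, this yields a cofiber sequence $\cO_{\CC/\CC^\times}(1) \to \cO \to (i_V)_*\cO_V$, and twisting produces $(i_V)_*\cO_V(n)$ from $\cO(n)$ and $\cO_{\CC/\CC^\times}(n+1)$ for every $n\in\ZZ$. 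The main point requiring care is the ind-coherent descent at the singular point $W$, but since our space is a global quotient of the affine scheme $\Spec A$, this reduces to the classical Milnor-square statement for modules over a fiber product of rings, so the argument should go through cleanly.
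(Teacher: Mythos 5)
Your argument is correct, but it takes a genuinely different route from the paper's. The paper reduces, via \Cref{lem:quotient-presentation}, to showing that the modules $A$ and $A/(y-1)$ generate $\Coh(X)$ for the affine curve $X=\Spec A$, and proves this by d\'evissage through the singularity category: $\Perf(X)$ is generated by $A$, the quotient $\Coh(X)/\Perf(X)$ is generated by the skyscraper at the node (quoting the known description of singularity categories of nodal curves), and that skyscraper is built from $A$ and $A/(y-1)$. You instead use the closed-cover (Milnor-square) decomposition: tensoring the short exact sequence $0\to A\to \CC[x]\oplus\CC[y^\pm]\to\CC\to 0$ with an arbitrary module exhibits every object as a shifted cofiber of pushforwards from the two smooth components, each of which is generated by twists of its structure sheaf, and your explicit sequence $0\to(x)\to A\to A/(x)\to 0$ together with the identification $(x)\simeq A/(y-1)$ (a twist of $\cO_{\CC/\CC^\times}$) places $(i_V)_*\cO_V$ and its twists in the colimit closure of the stated generators. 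Your route is more self-contained --- it needs no input about singularity categories --- and it dovetails with the descent along \eqref{eq:spectral-colimit} that the paper already asserts in the remark following \Cref{lem:quotient-presentation}; indeed, at the module level your fiber sequence is just exactness of $-\otimes_A(-)$ applied to the Milnor square, so no general clutching theorem is needed. Two points you should make explicit: since your fiber sequence realizes $\cF$ as a desuspension of a cofiber, ``generated under colimits'' must be read in the usual stable sense (closed under shifts), consistent with the paper's own usage; and since the ambient stack is singular, the $\IndCoh$ descent is along $!$-restrictions, but passing to left adjoints converts it into a colimit presentation along $*$-pushforwards, which is all your argument uses. The paper's proof buys brevity given the citation; yours buys an elementary argument and an explicit two-term resolution of $(i_V)_*\cO_V$ by the generators.
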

\begin{proof}
Let $A = \CC[x,y^\pm]/(x(y-1))$ and $X=\Spec(A).$ By \Cref{lem:quotient-presentation}, it is sufficient to check that $\IndCoh(X)$ is generated under colimits by the $A$-modules $A$ and $A/(y-1)$ or in other words that these two objects generate the category $\Coh(X)$ of coherent sheaves on $X$. The category $\Perf(X)$ is generated by the object $A$, and it is known (see for instance \cite{Dyckerhoff-gen}) that the singularity category $\Coh(X)/\Perf(X)$ of the nodal curve $X$ is generated by the skyscraper sheaf $A/(x,y-1)$ at the node. Since the skyscraper sheaf $\delta$ may be generated from $A$ and $A/(y-1),$ we conclude that the module $A/(y-1)$ also generates the singularity category, and therefore that $A$ and $A/(y-1)$ jointly generate $\Coh(X).$
\end{proof}

\subsection{Constructing a functor}\label{subsec:functor}
We are now ready to write down a functor which will implement the equivalence \eqref{eq:mainthm1}. As the automorphic category $\Sh^!(\fL\CC)$ is presented as a colimit
\[
\Sh^!(\fL\CC) = \varinjlim_{k,\ell}\Sh^!(X_k^\ell),
\]
we will construct this functor by writing down a compatible family of functors
\[
f_{k,\ell}:\Sh^!(X_k^\ell)\to\IndCoh(\spectral).
\]

Recall that the category $\Sh^!_\cS(X_k^\ell)$ is presented by the algebra $A_k^\ell$ obtained from the quiver $\cQ_k^\ell$ defined in \eqref{eq:quiver} by imposing the relations $\cI_k^\ell$ defined in \eqref{eq:relations-ideal}.
%
%
To define a functor with domain $\Sh^!(X_k^\ell),$ it is therefore sufficient to give a $\cQ_k^\ell$-diagram of objects and maps in $\Ind\Coh(\spectral)$ satisfying the relations $\cI_k^\ell$: the image of the vertex $v_n$ in the quiver $\cQ_k^\ell$ determines the image of the projective object $P_n$ corresponding to $v_n.$
\begin{definition}
We write $f_{k,\ell}:\Sh^!_\cS(X_k^\ell)\to\IndCoh(\spectral)$ for the functor defined by the $\cQ_k^\ell$-diagram
\begin{equation}\label{eq:fkl}
    \left(\begin{tikzcd}
    \cO(k)\ar[loop above]{}{y}\ar{r}{x}&
    \cO(k+1)\ar[loop above]{}{y}\ar{r}{x}&
    \cdots\ar{r}{x}&
    \cO(k+\ell-1)\ar[loop above]{}{y}\ar{r}{x}&
    \cO_{\CC/\CC^\times}(k+\ell)
    \end{tikzcd}\right)
\end{equation}
in $\IndCoh(\spectral),$ where the satisfaction of the relations $\cI_k^\ell$ comes from the relation $x(y-1)$ in \eqref{eq:globalquotient}.
\end{definition}

\begin{lemma}\label{lem:fullfaith}
The functors $f_{k,\ell}$ are fully faithful.
\end{lemma}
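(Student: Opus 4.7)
The plan is to reduce fully-faithfulness to a computation of morphism spaces between compact generators. By \Cref{prop:finite-automorphic}, the category $\Sh^!_\cS(X_k^\ell) \simeq \Mod_{A_k^\ell}$ is compactly generated by the indecomposable projectives $P_n$ for $n \in \{k, \ldots, k+\ell\}$, so it suffices to check that for each pair of indices the induced map
$$\Hom_{A_k^\ell}(P_m,P_n)\longrightarrow \Hom_{\IndCoh(\spectral)}(f_{k,\ell}(P_m),f_{k,\ell}(P_n))$$
is an equivalence concentrated in degree zero. Equivalently, since the diagram \eqref{eq:fkl} already realizes a $\cQ_k^\ell$-and-relations-$\cI_k^\ell$ action on $Q:=\bigoplus_n f_{k,\ell}(P_n)$ (with the relations enforced by the single equation $x(y-1)=0$ in $A:=\CC[x,y^\pm]/(x(y-1))$), one must verify that the resulting algebra map $A_k^\ell\to \mathrm{End}_{\IndCoh(\spectral)}(Q)$ is an isomorphism with no higher Ext-groups.

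The computation splits into four cases depending on whether each argument is a line bundle $\cO(\cdot)$ or the pushforward sheaf $\cO_{\CC/\CC^\times}(k+\ell)$. When the first argument is a line bundle, $\cO(m)$ is a free rank-one graded $A$-module, so $\mathrm{Ext}^{>0}$ automatically vanishes and the graded Hom-space is read off as the appropriate graded component: one finds $\Hom(\cO(m),\cO(n))=A_{n-m}$, giving $\CC[y^\pm]$ when $m=n$, one-dimensional (spanned by $x^{n-m}$) when $n>m$, and zero when $n<m$; likewise $\Hom(\cO(m),\cO_{\CC/\CC^\times}(k+\ell))=\CC$ for $m\leq k+\ell$. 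These match the path-algebra computation of $e_m A_k^\ell e_n$ under the correspondence $y\leftrightarrow m_n$ and $x\leftrightarrow c_n$, which sends the unique reduced path $c_{m+1}\cdots c_n$ to $x^{n-m}$.

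The main computation concerns the remaining cases, where the first argument is $\cO_{\CC/\CC^\times}(k+\ell)$. Here I would use the two-periodic free resolution
$$\cdots\xrightarrow{x}\cO(k+\ell-1)\xrightarrow{y-1}\cO(k+\ell-1)\xrightarrow{x}\cO(k+\ell)\xrightarrow{y-1}\cO(k+\ell)\twoheadrightarrow \cO_{\CC/\CC^\times}(k+\ell),$$
whose periodicity arises from the nodal zero-divisor relation $x(y-1)=0$. Applying $\Hom(-,\cO(n))$ in the relevant graded degree yields a cochain complex of the shape
$$\CC\xrightarrow{0}\CC\xrightarrow{\cong}\CC\xrightarrow{0}\CC\xrightarrow{\cong}\cdots,$$
with alternating zero and isomorphism differentials arising from the identities $(y-1)x^j=0$ and $x\cdot x^j=x^{j+1}$ in $A$; a direct cancellation argument shows that the cohomology vanishes, matching the A-side answer of zero (there is no quiver path from $v_{k+\ell}$ down to $v_n$). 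Applying instead $\Hom(-,\cO_{\CC/\CC^\times}(k+\ell))$ yields a similarly-shaped complex whose cohomology is $\CC$ concentrated in degree zero, since $y-1$ acts as zero on the quotient module; this matches $\mathrm{End}(P_{k+\ell})=\CC$.

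Once all four matrix coefficients agree, matching the algebra structure is immediate: the loops $m_n$ act on $\cO(n)$ as multiplication by $y\in\mathrm{End}(\cO(n))=\CC[y^\pm]$, the arrows $c_n$ act as multiplication by $x$, and the quiver relations $c_{i+1}(1-m_i)=0$ and $(1-m_j)c_j=0$ collapse to the single identity $x(y-1)=0$ in $A$. The main obstacle is the careful bookkeeping of gradings and direction conventions, together with the verification that the alternating cancellation in the two-periodic resolution kills all potential higher cohomology despite the nodal singularity of $\spectral$; once this is managed, the fully-faithfulness of $f_{k,\ell}$ follows directly.
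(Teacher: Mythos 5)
Your proposal is correct and follows essentially the same route as the paper: reduce full faithfulness to comparing morphism spaces between the projective generators $P_n$ and their images, and check that both Hom-algebras are the path algebra of $\cQ_k^\ell$ modulo $\cI_k^\ell$. The paper asserts this matching in one line, whereas you actually verify it (including the vanishing of higher Exts out of $\cO_{\CC/\CC^\times}(k+\ell)$ via the two-periodic resolution coming from the node); your computation is accurate, up to the minor point that the first terms of the complex computing $\Hom(\cO_{\CC/\CC^\times}(k+\ell),\cO(n))$ are copies of $\CC[y^\pm]$ rather than $\CC$, which does not affect the conclusion.
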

\begin{proof}
The category $\Sh^!(X_k^\ell)$ is the derived category of an abelian category, namely the abelian category of representations of the quiver with relations described in \Cref{prop:finite-automorphic}, generated by the projective objects associated to vertices of the quiver $\cQ_k^\ell,$ so
it is sufficient to check full faithfulness on these projective objects. The Homs among the projective objects 
are freely generated by the arrows $c_n$ and $m_n$ (the latter invertible) subject to the relations $m(c-1) = (c-1)m = 0$ (with the appropriate indexing). The same is true of the Hom spaces among the $\cO(n),\cO_{\CC/\CC^\times}(n).$
\end{proof}

\begin{lemma}\label{lem:compatibility}
The functors $f_{k,\ell}$ commute with the functors 
$(p_k^\ell)^!$ and $(i_k^\ell)_!$ described in \Cref{prop:finite-automorphic}.
\end{lemma}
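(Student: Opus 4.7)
The plan is to verify each commutativity on generators. Since $f_{k,\ell}$, $(p_k^{\ell+1})^!$, and $(i_k^\ell)_!$ all preserve colimits, and $\Sh^!_\cS(X_k^\ell) \simeq \Mod_{A_k^\ell}$ is generated by the indecomposable projectives $P_n^{(k,\ell)}$ for $k \leq n \leq k+\ell$, it suffices to compare both sides on each $P_n^{(k,\ell)}$.

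The easy cases amount to direct identifications at the level of quiver representations. For $(i_k^\ell)_!$, part~(3) of \Cref{prop:finite-automorphic} shows that the pushforward prepends a zero at the new leftmost vertex $k-1$, which lies below the support of $P_n^{(k,\ell)}$; hence $(i_k^\ell)_!\, P_n^{(k,\ell)} \simeq P_n^{(k-1,\ell+1)}$, and both sides evaluate under the respective $f$ to the same object ($\cO(n)$ or $\cO_{\CC/\CC^\times}(k+\ell)$ when $n=k+\ell$). Analogously, for $(p_k^{\ell+1})^!$ applied to $P_n^{(k,\ell)}$ with $n < k+\ell$, the extended representation is already $P_n^{(k,\ell+1)}$: the prescribed identity loop at vertex $k+\ell$ and identity arrow into the new terminal vertex $k+\ell+1$ exactly reproduce the structure of the longer projective.

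The essential case is $(p_k^{\ell+1})^!$ applied to $P_{k+\ell}^{(k,\ell)}$, whose image $M$ has $\kk$ with trivial monodromy at vertex $k+\ell$ (in contrast to the $\kk[m^\pm]$ of $P_{k+\ell}^{(k,\ell+1)}$). Here I write down a two-step projective resolution
\[
0 \to P_{k+\ell+1}^{(k,\ell+1)} \xrightarrow{c_{k+\ell+1}} P_{k+\ell}^{(k,\ell+1)} \xrightarrow{m_{k+\ell}-1} P_{k+\ell}^{(k,\ell+1)} \to M \to 0,
\]
whose exactness at the middle follows from the algebra relation $c_{k+\ell+1}(m_{k+\ell}-1) = 0$ together with injectivity of $(m-1)$ on the free part $\kk[m^\pm]$. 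Applying $f_{k,\ell+1}$, and using \Cref{lem:fullfaith} to translate $c_{k+\ell+1} \leftrightarrow x$ and $m_{k+\ell}-1 \leftrightarrow y-1$, produces the three-term complex
\[
\cO_{\CC/\CC^\times}(k+\ell+1) \xrightarrow{x} \cO(k+\ell) \xrightarrow{y-1} \cO(k+\ell)
\]
in $\IndCoh(\spectral)$.

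The main (mild) obstacle is the final cohomological verification that this complex is quasi-isomorphic to $\cO_{\CC/\CC^\times}(k+\ell)$. In $A = \CC[x,y^\pm]/(x(y-1))$ one computes $\cok(y-1) = A/(y-1)A \simeq \cO_{\CC/\CC^\times}$ (giving $H^0$), while $\ker(y-1) = \mathrm{Ann}_A(y-1) = xA$ coincides with the image of the injection $x\colon \cO_{\CC/\CC^\times} \to \cO$ (forcing $H^{-1} = H^{-2} = 0$). The key identity $\mathrm{Ann}(y-1) = xA$ (and its partner $\mathrm{Ann}(x) = (y-1)A$) expresses the transverse nodal structure of $\Spec A$ and is exactly the spectral-side counterpart of the relation $x(y-1)=0$ that was built into $A_k^\ell$ from the outset. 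Once this is noted, the whole lemma reduces to the statement that $f_{k,\ell+1}$ carries a length-two projective resolution of $M$ on the automorphic side to the standard length-two free resolution of $\cO_{\CC/\CC^\times}(k+\ell)$ on $\Spec A$.
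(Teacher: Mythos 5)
Your proof is correct and follows the same outline as the paper's: both arguments reduce to comparing the two compositions on the indecomposable projectives $P_n$, and both identify $(i_k^\ell)_!P_n$ with the corresponding projective of the longer quiver and $(p_k^{\ell+1})^!P_n\simeq P_n$ for $n<k+\ell$ as immediate consequences of \Cref{prop:finite-automorphic}. Where you go further is on the one genuinely non-formal point: the paper simply records that $(p_k^{\ell+1})^!$ sends $P_{k+\ell}$ to $P_{k+\ell}/(m_{k+\ell}-1)$ and asserts that this ``is precisely the intertwining relation,'' leaving implicit the derived identification $f_{k,\ell+1}\bigl(P_{k+\ell}/(m_{k+\ell}-1)\bigr)\simeq \cO_{\CC/\CC^\times}(k+\ell)$. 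Since $y-1$ is a zero-divisor in $A=\CC[x,y^\pm]/(x(y-1))$, this identification is not automatic, and your explicit length-two projective resolution of $M$, transported under $f_{k,\ell+1}$ to the complex $\cO_{\CC/\CC^\times}(k+\ell+1)\xra{x}\cO(k+\ell)\xra{y-1}\cO(k+\ell)$ together with the computation $\mathrm{Ann}_A(y-1)=xA=\Im\bigl(x\colon A/(y-1)\to A\bigr)$, is exactly the verification the paper elides; it is the spectral-side shadow of the relation $c_{k+\ell+1}(1-m_{k+\ell})=0$, as you note. The only quibbles are cosmetic: the appeal to \Cref{lem:fullfaith} is really just the definition of $f_{k,\ell+1}$ on the arrows of the quiver diagram, and the twist bookkeeping on the first map of your resolution depends on a sign convention for the $\CC^\times$-weights that the paper itself does not pin down. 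Neither affects the argument.
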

\begin{proof}
Index the vertices in the quiver $\cQ_k^\ell$ by $k,\ldots,k+\ell,$ as in \eqref{eq:quiver}. We will write $P_n$ for the projective object of $\Mod_{A_k^\ell}$ at the $n$th vertex. Similarly, we denote the diagram \eqref{eq:fkl} by $\cD_k^\ell,$ and we write $\cD_k^\ell(n)$ for the object placed at the $n$th vertex.

Observe from \Cref{prop:finite-automorphic} that the pullback $(p_k^\ell)^!$ sends $P_n\mapsto P_n$ for $k\leq n<k+\ell,$ and $P_{k+\ell}\mapsto P_{k+\ell}/(m_{k+\ell}-1).$ This is precisely the intertwining relation between the maps $f_{k,\ell}$ and $f_{k,\ell+1}.$
Similarly, the pushforward $(i_k^\ell)_!$ takes the projective $P_n$ to $P_{n+1},$ for all $n\in\{k,\ldots,k+\ell\}.$ Compatibility with the $f_{k,\ell}$ is therefore the requirement that $\cD_k^\ell$ embed in $\cD_{k-1}^{\ell+1}$ as the subquiver spanned by the last $\ell+1$ vertices, which is the case.
\end{proof}
\begin{definition}\label{def:the-functor}
As a result of \Cref{lem:compatibility}, the functors $f_{k,\ell}$ assemble into a functor whose domain is the colimit $\varinjlim\Sh^!_\cS(X_k^\ell) = \Sh^!_\cS(\fL \CC).$ We denote this functor by 
\[
F:\Sh^!_\cS(\fL \CC) \to \IndCoh(\spectral).
\]
\end{definition}

\begin{theorem}
The functor $F$ of \Cref{def:the-functor} is an equivalence of categories.
\end{theorem}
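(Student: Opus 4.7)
The plan is to verify that $F$ is cocontinuous, fully faithful, and essentially surjective. Cocontinuity is nearly immediate: each $f_{k,\ell}$ is defined by prescribing images of compact projective generators of $\Mod_{A_k^\ell}$ and extending cocontinuously, and by \Cref{lem:compatibility} these functors assemble compatibly, so $F$ is the cocontinuous functor on $\colim_{k,\ell}\Mod_{A_k^\ell}$ in $\Pr^L$ which they induce.

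For fully faithfulness, \Cref{lem:fullfaith} says each $f_{k,\ell}$ is fully faithful, hence an equivalence onto its essential image $\mathrm{Im}(f_{k,\ell}) \subset \IndCoh(\spectral)$, a full subcategory closed under colimits. \Cref{lem:compatibility} ensures these images are organized as a filtered system of full subcategories compatible with the automorphic transition functors. Passing to colimits in $\Pr^L$, the functor $F$ restricts to an equivalence $\Sh^!_\cS(\fL\CC) \simeq \bigcup_{k,\ell}\mathrm{Im}(f_{k,\ell})$, realized as a full cocontinuous subcategory of $\IndCoh(\spectral)$.

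For essential surjectivity, \Cref{prop:generation} asserts that $\IndCoh(\spectral)$ is generated under colimits by $\cO(n)$ and $\cO_{\CC/\CC^\times}(n)$ for $n \in \ZZ$. Each such generator appears as a vertex of the diagram $\cD_k^\ell$ for suitable $(k,\ell)$ --- internal vertices for $\cO(n)$ (any $(k,\ell)$ with $k \leq n < k+\ell$), and the terminal vertex for $\cO_{\CC/\CC^\times}(n)$ (namely $n = k+\ell$) --- hence lies in $\mathrm{Im}(f_{k,\ell})$. Since the essential image of $F$ contains all these generators and is closed under colimits, it exhausts $\IndCoh(\spectral)$, yielding the desired equivalence.

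The key technical point on which the argument rests is \Cref{lem:compatibility}: that the automorphic transition functors $(i_k^\ell)_!$ and $(p_k^\ell)^!$ correspond, under the finite-stage equivalences $f_{k,\ell}$, to honest inclusions of full cocontinuous subcategories of $\IndCoh(\spectral)$. This compatibility is precisely what allows a colimit of the individual equivalences to descend to an equivalence of the colimit categories. Verifying it amounts to matching the explicit quiver-module descriptions of \Cref{prop:finite-automorphic} with the global quotient presentation \eqref{eq:globalquotient}, which is already carried out in the preceding lemmas.
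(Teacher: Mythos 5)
Your proof is correct and follows essentially the same route as the paper: full faithfulness of the finite-stage functors $f_{k,\ell}$ combined with the generation statement of \Cref{prop:generation}, assembled over the colimit presentation of $\Sh^!_\cS(\fL\CC)$. The one step you treat as automatic --- ``passing to colimits in $\Pr^L$'' to conclude that $F$ is fully faithful onto the union of the images --- is not formal for colimits in $\Pr^L$ and is exactly the point where the paper invokes \cite{HR21}*{Lemma 8.4.0.2}; it does go through here because all categories involved are compactly generated, the transition functors preserve compact objects, and the $f_{k,\ell}$ send the compact projective generators to the compact objects $\cO(n)$ and $\cO_{\CC/\CC^\times}(n)$.
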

\begin{proof}
For full faithfulness, we apply the following fact, which can be found as \cite{HR21}*{Lemma 8.4.0.2}: suppose $\cC = \varinjlim \cC_i$ is a filtered colimit of strongly continuous fully faithful functors of compactly generated categories, and $\cC\to \cD$ is a functor preserving compact objects. If each $\cC_i\to \cD$ is fully faithful, then $\cC\to \cD$ is fully faithful. In our case, the functors $\cC_i\to \cD$ are the functors $f_{k,\ell},$ whose full faithfulness was checked as \Cref{lem:fullfaith}.

Essential surjectivity follows from the fact that the projectives in the quiver $\cQ_k^\ell$ map to twists of $\cO$ and $\cO_{\CC/\CC^\times},$ and twists of those objects jointly generate $\IndCoh(\spectral)$ by \Cref{prop:generation}.
\end{proof}

To complete the proof of \Cref{mainthm:betti-tate}, it remains to establish compatibility with Betti geometric class field theory.

\begin{proposition}
The equivalence of categories $F$ constructed above intertwines convolution by local systems on $\Loc(\fL \CC^\times)$ with tensor product by coherent sheaves pulled back from $\IndCoh(\cL_B(B\CC^\times)).$
\end{proposition}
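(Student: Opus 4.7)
The plan is to reduce to checking compatibility on a small set of generators of the acting category. Both actions preserve colimits in each variable---convolution by a cosheaf kernel on the automorphic side, and pullback-then-tensor on the spectral side---so it suffices to verify the intertwining on generators of $\IndCoh(\cL_B(B\CC^\times)) \simeq \IndCoh(B\CC^\times \times \CC^\times)$. By an argument analogous to \Cref{prop:generation}, this category is generated under colimits by the objects $\cO(n) \boxtimes \cO_{\CC^\times}$ for $n \in \ZZ$, which under Betti geometric class field theory correspond to the universal local systems on the valuation-$n$ strata of $\fL \CC^\times$. For bookkeeping purposes it is convenient to separate the check into the ``valuation shift'' factor $\cO(n) \in \Coh(B\CC^\times)$ and the ``monodromy'' factor $\cO_{\CC^\times} \in \QCoh(\CC^\times)$.

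First I would handle the valuation shifts. Tensoring by $\cO(n)$ on the spectral side shifts the weight in every entry of the diagram \eqref{eq:fkl} uniformly by $n$, sending $\cO(k+i) \mapsto \cO(k+i+n)$ and $\cO_{\CC/\CC^\times}(k+\ell) \mapsto \cO_{\CC/\CC^\times}(k+\ell+n)$. On the automorphic side, convolution with a local system supported on the valuation-$n$ stratum of $\fL\CC^\times$ performs the same shift on the stratification $\cS$ of $\fL\CC$, since the multiplication map $\fL\CC^\times \times \fL\CC \to \fL\CC$ adds the valuations of leading terms. Verifying this comes down to examining how the convolution descends to the finite-type approximations $X_k^\ell$ of \Cref{prop:finite-automorphic}, where it visibly matches the weight shift under $F$ on the level of quiver representations.

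Next I would handle the monodromy factor. On the spectral side, the structure map $\spectral \to \cL_B(B\CC^\times)$ sends the $\CC^\times$-coordinate of the loop space to the coordinate $y$ of the presentation \eqref{eq:globalquotient}, so tensoring with the pullback of $\cO_{\CC^\times}$ records the action of $y$, which in \eqref{eq:fkl} is precisely the monodromy automorphism $m_n^\pm$ at each stratum. On the automorphic side, convolution with the universal local system on a stratum of $\fL\CC^\times$ acts on each stratum of $\fL\CC$ via its internal $S^1$-monodromy. The identification holds because the automorphisms $m_n$ introduced in the proof of \Cref{prop:finite-automorphic} were built exactly from rotation of the leading nonzero coefficient, which is the same $S^1$-parameter along which the universal local system on $\fL\CC^\times$ is parametrized.

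The hardest part will not be any single generator check but the bookkeeping needed to fit these pieces together over the ind-pro limit defining $\fL\CC$. Concretely, one must verify that convolution with each generator kernel descends to the finite-type quiver approximations $X_k^\ell$ compatibly with the transition functors $(p_k^\ell)^!$ and $(i_k^\ell)_!$, and that the generator-level intertwinings with $F$ at each finite level assemble coherently into an intertwiner on the colimit. Once this is established, the relations in the acting category reduce to the single identity $x(y-1)=0$ from \eqref{eq:globalquotient}, which holds manifestly on both sides via the quiver relations $\cI_k^\ell$ of \eqref{eq:relations-ideal}, so the remaining monoidal compatibility is automatic.
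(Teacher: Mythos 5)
Your proposal is correct and follows essentially the same route as the paper: both reduce to checking the intertwining on a generating family of kernels and match the winding/valuation grading of $\fL\CC^\times$ with the $B\CC^\times$-weight twist $\cO(n)$ and the monodromy with the coordinate $y$ of the presentation \eqref{eq:globalquotient}. The only real difference is that the paper also explicitly records the check for rank-one local systems with fixed monodromy $a$ (corresponding to the skyscrapers $\delta_a$ in weight zero), which your universal local systems $\cO(n)\boxtimes\cO_{\CC^\times}$ subsume under colimits.
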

\begin{proof}
First, observe that $F$ is a t-exact functor (for the natural t-structures defined on both sides of the equivalence), and by passing to hearts, it is sufficient to check the statement of the proposition at the level of abelian categories: this follows from \cite{Gannon-nondegen}*{Theorem A.4}, which shows that passing to hearts defines a symmetric monoidal equivalence between a category of Grothendieck abelian categories with enough projective objects, and a category of derived categories of Grothendieck abelian categories with enough projectives.

Now note that an action of the abelian heart of the equivalent symmetric monoidal categories of \Cref{eq:betti-gcft} on an abelian category $\cC$ is determined by the data of a $\CC[a^\pm]$-linear structure on $\cC$ together with a $\CC[a^\pm]$-linear autoequivalence $\cC\xrightarrow{\sim}\cC,$ and it is straightforward to check that these agree.

%
Namely, on the left-hand side of \eqref{eq:betti-gcft}, convolution by the local system with monodromy $a$ on the degree-zero component $\CC^\times\times \{0\}\subset \CC^\times\times \ZZ\simeq \fL\CC^\times$
picks out the monodromy-$a$ part of local systems on the strata of $\fL\CC$; under the equivalence $F,$ this is tensoring with the (placed-in-weight-0) skyscraper sheaf $\delta_a\in\Coh(\CC^\times\times B\CC^\times).$ On the other hand, convolution by the universal local system $\univ[n]$ on the $n$th component of $\fL\CC=\varinjlim X_k$ by $n$ iterations of the pushforward $X_k\hookrightarrow X_{k-1}$ (which makes sense for $n$ negative because of the colimit); as desired, this corresponds on the spectral side to the tensor product with $\cO_{\CC^\times}(n),$ the structure sheaf of $\CC^\times$ twisted by weight $n$.
\end{proof}

\bibliographystyle{plain}
\bibliography{refs}
\end{document}